\newtheorem{proposition}{Proposition}
\newtheorem{lemma}{Lemma}
\newtheorem{corollary}{Corollary}
\newtheorem{theorem}{Theorem}
\newtheorem{definition}{Definition}
\newtheorem{question}{Question}
\newenvironment{proof}[1][Proof]{\noindent\textbf{#1.} }{\ \rule{0.5em}{0.5em}}
\begin{document}
\title{Axioms for Typefree Subjective Probability }
\author{Cezary Cie\'sli\'nski (University of Warsaw)\footnote{c.cieslinski@uw.edu.pl} \\ Leon Horsten (Universit\"at Konstanz)\footnote{Leon.Horsten@uni-konstanz.de} \\ Hannes Leitgeb (Ludwig-Maximilian-Universit\"at M\"{u}nchen)\footnote{leitgebmcmp@icloud.com}}

\date{\today}

\maketitle


\begin{abstract}
\noindent We formulate and explore two basic axiomatic systems of typefree subjective probability. One of them explicates a notion of finitely additive probability. The other explicates a concept of infinitely additive probability. It is argued that the first of these systems is a suitable background theory for formally investigating controversial principles about typefree subjective probability.
\end{abstract}


\section{Introduction}\label{introduction}

Subjective rational probability is intensively investigated in contemporary formal epistemology and confirmation theory. This notion is normally conceived of, either explicitly or implicitly, in a typed way, i.e., as applying to propositions that do not contain the concept of subjective rational probability itself. But formal epistemologists are becoming increasingly interested in typefree (or reflexive) subjective rational probability.

From a logical point of view, the following urgent question then presents itself:
\begin{center}
\textbf{What are basic logical calculi governing \\ typefree subjective rational probability?}
\end{center}

\noindent This is the question that we discuss in this article.

Since we want to have natural ways of constructing self-referential sentences at our disposal (in particular the diagonal lemma), we will formalise subjective probability as a (two-place) \textit{predicate} rather than as a sentential operator, as is the more common practice in confirmation theory and formal epistemology. Our predicate expresses a functional relation between sentences on the one hand, and rational or real numbers on the other hand. The target notion will be the familiar concept of subjective rational probability. This is in contrast with some other recent work on self-refe\-rential probability (such as \cite{CampbellMoore} and \cite{Leitgeb2}) in which a semantic concept of probability is targeted. Moreover, in this article we insist on classical logic governing this subjective probability predicate: first-order classical logic will be relied on throughout.

It should not, perhaps, be assumed that there is a single correct elementary theory of typefree subjective rational probability. Maybe we should instead look for basic calculi that occupy a significant place in a landscape of possible background theories of typefree subjective probability.
Surprisingly, this field is wide open. But this question is important. In order to obtain solid and general results in formal epistemology, rigorous axiomatic frameworks in which controversial epistemological rules and principles are studied, are needed.

We present and discuss two such calculi: one for \textbf{finitely additive} probability, and one for \textbf{$\sigma$-additive} probability. We do not claim that these are the only interesting elementary systems of typefree subjective probability that can be thought of. We investigate some of the proof-theoretic properties of these systems, motivated by an analogy with certain typefree truth theories. We will see that the elementary system for finitely additive probability that we propose can be seen as a \emph{minimal} system of typefree subjective probability, whereas the elementary system for $\sigma$-additive probability that we propose can be seen as a \emph{maximal} system of typefree subjective probability. In a concluding section, we take some first steps in the investigation of controversial epistemic principles against the background of these basic formal calculi. 

In our investigation, we will exploit the analogy between probability and truth: the property of truth is to some extent similar to, albeit of course not identical to, the property of having probability 1. Also, subjective probability can be seen as a quantitative version of the qualitative notion of justified belief. So the theory of reflexive justified belief also contains lessons for the theory of typefree subjective probability.

Our aim is to develop calculi that are in a sense \textbf{elementary}. In particular, we want to keep the languages that we work with as simple as possible.  The only non-logico-mathematical symbol will be one for subjective rational probability ($\mathrm{Pr}$). In this sense, we focus in this article on the \textbf{pure} calculus of typefree subjective probability. Thus we work in a more austere environment than some recent work in this area, in which the relation between truth and probability is investigated in a typefree context (such as \cite{Leitgeb} and \cite{Leitgeb2}). This does not mean that we find these richer frameworks in any way objectionable. But we believe that having a robust sense of what is possible in an austere setting is valuable for research into typefree probability in more expressive settings. Likewise we have of course no objection whatsoever against enriching the language of typefree subjective probability with empirical predicates, although we will not have much to say about that in the sequel.

The technical results in this article must be classed as \emph{basic}. Most of the propositions and theorems are obtained by adapting arguments in the literature for analogous arguments for axiomatic theories of related notions, such as truth, justification, believability. Our aim here is merely to contribute to the \textbf{groundwork} of the theory of typefree subjective probability: much work remains to be done.


\section{Paradox?}\label{paradox}

We will try to exploit, to some extent, the analogy between having subjective rational probability 1, on the one hand, and being true, on the other hand. Since we are interested in typefree probability, the analogy will be with typefree truth.

Typefree truth is a notion that is known to be prone to paradox: intuitive principles (the unrestricted Tarski-biconditionals) lead to a contradiction. What about type-free (subjective) probability?

Two important principles from the literature on axiomatic truth are \textbf{Factivity} and \textbf{Necessitation}. Factivity is the schematic axiom that says that \textit{if A is true, then A}; Necessitation is the following schematic inference rule: \textit{From a proof of A, infer that A is true}.
From the literature on axiomatic truth, we know that Factivity and Necessitation together yield a contradiction. This is known as the Kaplan-Montague paradox:\footnote{See \cite{KaplanMontague}.} it is a mild strengthening of the liar paradox.  The literature on typefree truth theories shows that typefree truth theories divide roughly into two families: Friedman-Sheard-like ($\mathrm{FS}$-like) theories and Kripke-Feferman-like ($\mathrm{KF}$-like) theories. This can be seen as a reflection on whether Factivity or the rule of Necessitation ought to be rejected: $\mathrm{FS}$ rejects Factivity, and $\mathrm{KF}$ rejects Necessitation.\footnote{See for instance \cite[chapter 14, chapter 15]{Halbach2}. In this article we assume familiarity with FS, not with KF.}

For typefree subjective probability, all this means that there is a prima facie reason for being at the same time worried and cautiously hopeful. The basic axioms for subjective rational probability are Kolmogorov's axioms\footnote{See \cite{Kolmogorov}.} for being a finitely additive probability function. One of Kolmogorov's principles says that \emph{necessary} truths should be given probability 1. We want to keep our language as simple as possible, so we do not have a notion of necessity represented in it. Therefore we cannot directly express this principle. But the Necessitation Rule for subjective probability 1, i.e., 
$$   \frac{\vdash \varphi}{\vdash \mathrm{Pr} (\phi) = 1},$$
appears to be a passable approxiomation to (and indeed weakening of) it.\footnote{In the interest of readability, we will be somewhat sloppy with notation, especially regarding coding, in this article.} Since not only the purely mathematical principles about the rational numbers or the real numbers, but presumably also the normative principles that govern subjective rational probability are necessary, this rule should hold for all $\varphi$, including those that include occurrences of $\mathrm{Pr}$.

Thus we have \emph{half} of what is needed to generate a contradiction, i.e., we have reason to be worried. On the other hand, while Factivity seems eminently plausible for truth, it is not clearly a reasonable constraint on probability 1. The only principle concerning subjective probability, considered in the literature, that entails it, is the \textbf{Principle of Regularity}, which says that only necessary truths should be given subjective probability 1. The principle of Regularity is widely rejected as a constraint on rational subjective probability.\footnote{See for instance \cite{Hajek}.} Indeed there is prima facie reason to be suspicious about this principle: for instance, it seems natural to assign probability 1 to propositions that express elementary observational results, which are obviously contingent. In any case, we now already see that the situation is dire for calculi of typefree subjective probability that do include Regularity (such as certain non-Archimedean theories of probability)\footnote{See for instance \cite{BHW}.}, for then, if we accept Necessitation, the Kaplan-Montague argument goes through. 

In the light of these considerations, we reject Factivity and endorse Necessitation. We then cannot fully carry out the Kaplan-Montague argument for probability 1, and can at least hope to avoid contradiction. By rejecting Factivity, we position ourselves in an FS-like, rather than in a KF-like environment. Not only is Factivity to be rejected, but there seems also no reason to trust its weaker cousin Converse Necessitation. Even a \textit{proof} that  a given statement has probability 1 does not give us a compelling reason that that statement is true.

When the truth predicate in FS is interpreted as a concept of probability rather than as truth, the resulting principles are close to a type-free version of the Kolmogorov axioms. Our strategy will therefore be to get as close as consistently possible to the Kolmogorov axioms in a typefree predicate setting, and against a reasonable mathematical background.


\section{Finite and $\sigma$-additive typefree probability}

In this section, we present an elementary formal theory of finite typefree subjective probability, and an elementary formal theory of $\sigma$-additive typefree subjective probability. Moreover, we discuss some elementary properties of these two systems.


\subsection{Languages and background theories}

We will define a basic theory of typefree finitely additive probability and a basic theory of $\sigma$-additive probability. For finitely additive probability we do not need to take limits, so a background theory of the rational numbers suffices. For $\sigma$-additive probability we do need to take limits, so a background theory of the real numbers is needed.

The natural numbers in each case form a significant sub-collection of the domain of discourse. So we assume that each of the two languages contains a predicate $N$ that expresses being a natural number.


\subsubsection{$Q$ and $\mathcal{L}_{Q}$}

Let $Q^-$ be some standard classical theory of the rational numbers, formulated in the language $\mathcal{L}_{Q^-}$, such that it contains the Peano Axioms restricted to $N$. The language $\mathcal{L}_{Q}$ is defined as $\mathcal{L}_{Q^-} \cup \{ \mathrm{Pr} \}$, where $\mathrm{Pr}$ is a two-place predicate such that $\mathrm{Pr}(x,y)$ expresses that the rational subjective probability of $x$ is $y$. We will sometimes write $\mathrm{Pr}(x)=y$ instead.

We assume that, in the finitely additive probability theory that we will define, the logical and nonlogical schemes of $Q^-$ are extended to the language including $\mathrm{Pr}$. This gives rise to the theory $Q$.


\subsubsection{$R$ and $\mathcal{L}_{R}$}

Let $R^-$ be some standard classical theory of the real numbers, formulated in a language $\mathcal{L}_{R^-}$, and let $\mathcal{L}_{R}$ be defined as $\mathcal{L}_{R^-} \cup  \{ \mathrm{Pr} \} $.
Again we assume that, in the probability theories that we will define, the logical and nonlogical schemes of $R^-$ are are extended to the language including $\mathrm{Pr}$. This gives rise to the  theory $R$.


\subsubsection{Coding}

For the language $\mathcal{L}_{Q}$, coding works in the usual way. But there are uncountably many real numbers. To deal with this, we proceed roughly as in \cite{Fujimoto}. 
 Within $R^-$ we can describe the language $\mathcal{L}_{R}^{\infty}$, which contains $\mathcal{L}_{R}$, but also contains constant symbols $c_x$ for each element $x\in \mathbb{R}$. This formalisation of $\mathcal{L}_{R}^{\infty}$ in $\mathcal{L}_{R}$ provides us with a coding of the expressions of $\mathcal{L}_{R}^{\infty}$. For an $\mathcal{L}_{R}^{\infty}$-expression $e$ we denote its code by $\ulcorner e \urcorner$. We specially denote the code of $c_x$ for $x\in \mathbb{R}$ by $\dot{x}$. This formalisation also comes with a coding of various syntactic relations and operations on $x\in \mathbb{R}$.\footnote{An alternative way of proceeding for the language of the real numbers is to work with a probability-satisfaction predicate $\mathrm{Pr}(x,y,z)$, which expresses that the probability of $x$ holding of $y$ is $z$.} As mentioned in the introduction, throughout the article we will often be sloppy in our notation.


\subsection{Finitely additive typefree probability}\label{finite}


We first turn to the principles of the basic theory of \textbf{finitely additive typefree probability}, which we call $\mathrm{RKf}$ (``Reflexive Kolmogorov Finite''). They are expressed in $\mathcal{L}_{Q}$, which is $\mathcal{L}_{Q^-} \cup \{\Pr \}$. Let $\textnormal{Tm}^{\textnormal{c}}$ be the set of constant terms and let $t^{\circ}$ be the value of term $t$ (both notions can be expressed in $\mathcal{L}_{Q}$). The axioms are as follows:

\begin{enumerate}

\item[Kf1$^-$] $Q$

\item[Kf2$^-$] $\mathrm{Pr}$ is a function

\item[Kf3$^-$] $ \Pr(x,y) \rightarrow (x \in \mathcal{L}_{Q}  \wedge 0 \leq x \leq 1) $

\item[Kf4$^-$] $ \forall t \in \textnormal{Tm}^{\textnormal{c}} \big( \Pr(\varphi(t)) = 1 \equiv \varphi(t^{\circ}) \big)$ \hspace{0.5cm} for all $\varphi \in \mathcal{L}_{Q^-}$

\item[Kf5$^-$] $\Pr(x), \Pr(y) \leq \Pr(x \vee y)$


\item[Kf6$^-$]  $ \Pr (x \dot{\vee} y) = \Pr (x) + \Pr (y) - \Pr (x \dot{\wedge} y)    $

\item[Kf7$^-$] $ \Pr (\dot{\neg} x) = 1 - \Pr (x) $

\item[Kf8$^-$] $$  \frac{\vdash \phi}{\vdash \Pr (\phi , 1)} , $$

\end{enumerate}

In these axioms, the free variables are assumed to be universally quantified over. Kf4 is an axiom schema; concrete axioms are obtained from Kf4 by substituting formulas of $\mathcal{L}_{Q^-}$ for the schematic letter $\varphi$.

A comparison with \cite{Kolmogorov} shows that all principles of $\mathrm{RKf}$ except Kf4, Kf5, and Kf8 are Kolmogorov axioms. But Kf4 and Kf8 together aim to approximate the remaining Kolmogorov axiom, viz. the axiom that says that necessary truths have probability 1. In particular, rule Kf8 is justified because a proof of a statement $\phi$ from the (necessary) pure principles of typefree subjective probability entails that that $\phi$ is necessary, and therefore should get probability 1. In Leitgeb's systems of typefree probability, a slightly different necessity principle is adopted, namely $$ Bew_{S} (x) \rightarrow  \Pr (x , 1) , $$ where $S$ is the background system \textit{without} the principles of subjective probability \cite[section 3]{Leitgeb2}. This necessitation principle is of course sound, but it is obviously weaker than Kf8 in specific ways.

Typefree systems can never be fully compositional, since type-freeness precludes an ordinary notion of rank of formulas. Nonetheless, FS has been touted as a highly compositional axiomatic theory of truth.\footnote{See \cite{Halbach}.} The system $\mathrm{RKf}$ is also highly compositional, but slightly less so than FS, for the axiom Kf6 does not explain the truth conditions of probabilities of disjunctions in terms of truth conditions of formulae of lower rank. Axiom Kf5 has been included in order to compensate (to some degree) for this deficiency.

As mentioned in the introduction, $\mathcal{L}_{Q}$ can be extended by empirical vocabulary. Empirical truths can unproblematically be added to $\mathrm{RKf}$ as extra premises. But we do not automatically want to assign all empirical truths probability 1. So we do not simply want to add empirical truths as new \textit{axioms}, for then they would fall in the scope of Kf8.

$\mathrm{RKf}$ is at least minimally capable:

\begin{proposition}

$\mathrm{RKf} \vdash \forall \phi, \psi \big( \Pr (\phi \rightarrow \psi,1) \rightarrow \Pr (\psi) \geq \Pr (\phi) \big)$.

\begin{proof}
Straightforward calculation in $\mathrm{RKf}$.
\end{proof}
\end{proposition}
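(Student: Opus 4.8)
The plan is to argue inside $\mathrm{RKf}$: fix (codes of) sentences $\phi,\psi$, assume $\Pr(\phi\to\psi)=1$, and then expand the left-hand side with the Kolmogorov-style axioms Kf6 and Kf7 and bound the outcome with Kf3. Reading the coded conditional $\phi\to\psi$ in the usual way as $\dot{\neg}\phi\,\dot{\vee}\,\psi$, the computation runs as follows.

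First, by Kf6 instantiated at $x:=\dot{\neg}\phi$ and $y:=\psi$,
\[
\Pr(\dot{\neg}\phi\,\dot{\vee}\,\psi)=\Pr(\dot{\neg}\phi)+\Pr(\psi)-\Pr(\dot{\neg}\phi\,\dot{\wedge}\,\psi).
\]
By Kf7, $\Pr(\dot{\neg}\phi)=1-\Pr(\phi)$. Substituting this together with the hypothesis $\Pr(\dot{\neg}\phi\,\dot{\vee}\,\psi)=1$ and rearranging in $Q$ gives
\[
\Pr(\phi)=\Pr(\psi)-\Pr(\dot{\neg}\phi\,\dot{\wedge}\,\psi).
\]
Finally, Kf3 tells us that every probability value lies in $[0,1]$, so $\Pr(\dot{\neg}\phi\,\dot{\wedge}\,\psi)\geq 0$, whence $\Pr(\psi)\geq\Pr(\phi)$. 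Discharging the assumption and generalising over $\phi,\psi$ yields the proposition. Here Kf1 (i.e.\ $Q$) supplies the elementary arithmetic of the rationals used in the rearrangement, and Kf2 guarantees that the terms $\Pr(\dot{\neg}\phi),\Pr(\psi),\Pr(\dot{\neg}\phi\,\dot{\wedge}\,\psi)$ denote, so the appeals to Kf6 and Kf7 are legitimate; Kf5 and Kf8 are not needed.

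There is essentially no obstacle — hence the terse ``straightforward calculation'' — and the only point deserving a word is purely a matter of coding convention: what the code $\phi\to\psi$ is taken to be. If $\to$ is not treated as an abbreviation built from $\dot{\neg},\dot{\vee},\dot{\wedge}$ but as a genuinely primitive connective, one first needs that $\mathrm{RKf}$ proves $\Pr$ to be invariant under propositional equivalence of its sentential argument, so that $\Pr(\phi\to\psi)=\Pr(\dot{\neg}\phi\,\dot{\vee}\,\psi)$. That invariance is itself obtained by the same style of computation — use Kf8 to put probability $1$ on the relevant provable equivalences and push them through Kf6 and Kf7 — so nothing substantial changes. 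Given the paper's stated policy of being sloppy about coding, the direct computation above is the intended argument.
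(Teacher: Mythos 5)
Your calculation is correct and is exactly the ``straightforward calculation'' the paper has in mind: read $\phi\to\psi$ as $\dot{\neg}\phi\,\dot{\vee}\,\psi$, apply Kf6 and Kf7, use the hypothesis $\Pr(\phi\to\psi)=1$, and bound $\Pr(\dot{\neg}\phi\,\dot{\wedge}\,\psi)\geq 0$ via Kf3. (The only soft spot is your closing aside: establishing that $\Pr$ is invariant under provable equivalence ``by the same style of computation'' would most naturally invoke the very proposition being proved, so it is better simply to fix the convention that $\to$ is an abbreviation, as your main argument does.)
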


\noindent It follows in particular that if $\Pr (\phi \rightarrow \psi)=1$ and $\Pr (\phi)=1$, then $\Pr (\psi)=1$. In view of this, we may ask an analogue of a question from provability logic:
\begin{question}
What is the propositional modal logic of probability 1, conceived as a modality?
\end{question}
\noindent From the previous proposition and Rule Kf8 it follows that it is a \textit{normal} propositional modal logic.

Rule Kf8 can be interpreted as saying that $\mathrm{RKf}$ is pointwise self-recom\-men\-ding. Since probability 1 does not entail truth, one might hope that even global self-recommendation does not contradict G\"odel's incompleteness theorems, even though it looks like a (global) reflection principle. However, this is not the case:

\begin{proposition}\label{refl}
 There is no consistent system  $S\supset$ $\mathrm{RKf}$ for which $$  S \vdash \forall x: Bew_S(x) \rightarrow \Pr(x) = 1 .$$

\begin{proof}
Assume $ S \vdash \forall x: Bew_S(x) \rightarrow \Pr(x) = 1 $ and $S$ is consistent. Take, by diagonalisation, a formula $\varphi$ such that $$  S \vdash \varphi \leftrightarrow\neg ( Bew_S(\varphi) \rightarrow \Pr(\varphi) = 1  ).$$ We reason in $S$, and suppose $\varphi$. Then $\neg ( Bew_S(\varphi) \rightarrow \Pr(\varphi) = 1  ),$ which contradicts our assumption. So we have  $S \vdash \neg \varphi .$ By {\normalfont Kf8}, then $S \vdash \Pr ( \neg \varphi ) = 1,$ i.e.,   $S \vdash \Pr (  \varphi ) = 0.$ So $S \vdash \neg Bew_S (\varphi)$, by our assumption. So by the second incompleteness theorem, $S$ is inconsistent.
\end{proof}

\end{proposition}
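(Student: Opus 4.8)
The plan is to run a Gödel--Carnap style fixed-point argument, i.e.\ a diagonal construction of a sentence that ``says of itself'' that the self-recommendation principle fails for it, and then to derive that $S$ proves its own consistency. First I would assume, for contradiction, that $S \supseteq \mathrm{RKf}$ is consistent and that $S \vdash \forall x\,(Bew_S(x) \rightarrow \Pr(x)=1)$. By the diagonal lemma applied inside $S$ (which extends $Q$, hence supports diagonalisation), fix a sentence $\varphi$ with
$$ S \vdash \varphi \leftrightarrow \neg\bigl(Bew_S(\ulcorner\varphi\urcorner) \rightarrow \Pr(\ulcorner\varphi\urcorner)=1\bigr). $$

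Next I would argue informally inside $S$: assuming $\varphi$, the right-hand side gives $\neg(Bew_S(\varphi)\rightarrow \Pr(\varphi)=1)$, which directly contradicts the global self-recommendation assumption $Bew_S(\varphi)\rightarrow\Pr(\varphi)=1$; hence $S\vdash\neg\varphi$. Now apply the Necessitation rule Kf8 to the $S$-proof of $\neg\varphi$ to obtain $S\vdash \Pr(\ulcorner\neg\varphi\urcorner)=1$; by Kf7 this yields $S\vdash \Pr(\ulcorner\varphi\urcorner)=0$. Combined with Kf2 (functionality of $\Pr$) and Kf4 applied to a purely arithmetical inequality such as $0\neq 1$, we get $S\vdash\Pr(\ulcorner\varphi\urcorner)\neq 1$. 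Feeding this back through the contrapositive of the self-recommendation axiom instance $Bew_S(\varphi)\rightarrow\Pr(\varphi)=1$ gives $S\vdash \neg Bew_S(\ulcorner\varphi\urcorner)$.

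Finally, since $S\vdash\neg\varphi$, standard provability considerations (the derivability conditions for $Bew_S$, available because $S\supseteq Q$) give that ``$\neg Bew_S(\ulcorner\varphi\urcorner)$'' is, provably in $S$, a statement of the form ``$S$ does not prove something'', and in fact $\neg Bew_S(\ulcorner\varphi\urcorner)$ entails $\mathrm{Con}(S)$ over $S$ — because if $S$ were inconsistent it would prove $\varphi$ along with everything else. Thus $S\vdash\mathrm{Con}(S)$, and by Gödel's second incompleteness theorem $S$ is inconsistent, contradicting our assumption.

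The main delicate point is the step that passes from $S\vdash\Pr(\ulcorner\varphi\urcorner)=0$ to $S\vdash\neg Bew_S(\ulcorner\varphi\urcorner)$ and then to $S\vdash\mathrm{Con}(S)$: one has to be careful that $0\neq 1$ is available as a $\Pr$-fact (this is where Kf4 together with the functionality axiom Kf2 is used, to turn the arithmetical truth $0\neq1$ into $\Pr(\ulcorner\varphi\urcorner)\neq1$), and that the informal reasoning ``inside $S$'' is genuinely formalisable — in particular that the implication $\neg Bew_S(\ulcorner\varphi\urcorner)\rightarrow \mathrm{Con}(S)$ is provable in $S$, which needs the usual Löb/Bernays--Hilbert derivability conditions and hence the assumption that $S$ is a sufficiently strong, recursively axiomatised extension of $Q$ (so that $Bew_S$ behaves as a genuine provability predicate, including being closed under the Necessitation rule Kf8 for the purposes of the diagonal sentence). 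Everything else is a routine unwinding of the biconditional and of the Kolmogorov-style axioms Kf2, Kf4, Kf7, Kf8.
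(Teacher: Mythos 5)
Your proposal is correct and follows essentially the same route as the paper's own proof: the same diagonal sentence, the same derivation of $S\vdash\neg\varphi$, the same application of Kf8 and Kf7 to get $\Pr(\ulcorner\varphi\urcorner)=0$, and the same appeal to the second incompleteness theorem. You merely spell out more explicitly the steps the paper leaves implicit (the use of Kf2/Kf4 to get $\Pr(\ulcorner\varphi\urcorner)\neq 1$, and the passage from $S\vdash\neg Bew_S(\ulcorner\varphi\urcorner)$ to $S\vdash\mathrm{Con}(S)$), which is a useful clarification but not a different argument.
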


\noindent  Observe that our proof of the first part of this proposition shows that also the ``local'' version of the principle $\forall x: Bew_S(x) \rightarrow \Pr(x) = 1 ,$ i.e., the scheme $Bew_S(\varphi) \rightarrow \Pr(\varphi) = 1 ,$ is inconsistent.  

For reasonable $S$, the principle $\forall x: Bew_S(x) \rightarrow \Pr(x) = 1 $ should  be \textit{true}, so we should be able consistently add it to $S$. Moreover, this principle looks similar to the uniform reflection principle for $S$. Indeed, we suggest that principles such as these are regarded as a kind of \textit{proof theoretic reflection principles}.

All this suggests the following question, which, as far as we know, is open:
\begin{question}
Let conditional probability be defined in the usual way by the ratio formula.  Is there a consistent system  $S\supset$ $\mathrm{RKf}$ for which $$  S \vdash \forall x: \Pr (Bew_S(x) \neq 0) \rightarrow  \Pr (x \mid Bew_S(x)) =1 ?$$
\end{question}
\noindent Here the antecedent is of course inserted only so as to ensure that the consequent is well-defined.\\

Let us now to turn to the question which principles we can consistently add to $\mathrm{RKf}$.

It is easy to see that adding \textbf{probability iteration principles} to $\mathrm{RKf}$ quickly leads to inconsistency.\footnote{See \cite{Christiano}.} This means that despite its minimality, the principles of $\mathrm{RKf}$ already highly constrain the class of possible extensions. As a simple example, just to see how these arguments go, consider the probabilistic analogue of the $S4$ principle of modal logic, which we call $\mathrm{Pr}4$ :\footnote{Weisberg calls this condition \textit{Luminosity} \cite[p.~184]{Weisberg}.} $$ \Pr (\phi, 1) \rightarrow \Pr (\Pr (\phi, 1),1)    .$$

\begin{proposition}\label{4inconsis}
 $\mathrm{RKf} + \mathrm{Pr}4$ is inconsistent.
 
\begin{proof}
Take a \textbf{probabilistic liar sentence} $\lambda$ such that $\mathrm{RKf} \vdash \lambda \leftrightarrow \neg \Pr (\lambda, 1).$ (Such a $\lambda$ of course exists by the diagonal lemma.) Arguing in $\mathrm{RKf}$, Necessitation of the left-to-right direction yields $ \Pr (\Pr (\lambda, 1) \rightarrow \neg \lambda, 1)$. Distributing $\Pr$ over the conditional gives us $$\Pr (\Pr (\lambda, 1), 1) \rightarrow \Pr ( \neg \lambda, 1).$$ An instance of $\mathrm{Pr}4$ is $ \Pr (\lambda, 1) \rightarrow \Pr (\Pr (\lambda, 1), 1)$. Putting these together gives us $ \Pr (\lambda, 1) \rightarrow \Pr ( \neg \lambda, 1)$, i.e., $ \neg \Pr (\lambda, 1)$. Using the right-to-left direction of the instance of the diagonal lemma, we then have $\lambda$, and by Necessitation $ \Pr (\lambda, 1)$, which contradicts our earlier result.
\end{proof}
\end{proposition}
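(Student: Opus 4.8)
The plan is to derive a contradiction from a probabilistic liar sentence, exactly as one derives the inconsistency of the $S4$ truth axiom with the Necessitation rule in FS-style settings. First I would invoke the diagonal lemma to obtain a sentence $\lambda$ with $\mathrm{RKf} \vdash \lambda \leftrightarrow \neg \Pr(\lambda, 1)$. From the left-to-right direction $\lambda \rightarrow \neg \Pr(\lambda,1)$, which is provable in $\mathrm{RKf}$, I would apply Necessitation (Kf8) to get $\Pr\big(\lambda \rightarrow \neg\Pr(\lambda,1),\, 1\big)$.

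Next I would reason inside $\mathrm{RKf}$. The key tool is the Proposition already established: from $\Pr(\phi \rightarrow \psi, 1)$ we get $\Pr(\psi) \geq \Pr(\phi)$, hence in particular $\Pr$ distributes over provable conditionals in the sense that $\Pr(\phi \rightarrow \psi,1)$ together with $\Pr(\phi,1)$ yields $\Pr(\psi,1)$. Applying this with $\phi := \Pr(\lambda,1)$ and $\psi := \neg\lambda$ gives
\[
\Pr(\Pr(\lambda,1),1) \rightarrow \Pr(\neg\lambda,1).
\]
Now I would feed in the relevant instance of $\mathrm{Pr}4$, namely $\Pr(\lambda,1) \rightarrow \Pr(\Pr(\lambda,1),1)$, and chain the two implications to obtain $\Pr(\lambda,1) \rightarrow \Pr(\neg\lambda,1)$. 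By Kf7, $\Pr(\neg\lambda,1)$ is equivalent to $\Pr(\lambda) = 0$, which together with Kf2 (functionality) and the fact that $0 \neq 1$ contradicts $\Pr(\lambda,1)$; so we conclude $\neg \Pr(\lambda,1)$.

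Finally, the right-to-left direction of the diagonal equivalence gives $\lambda$, and one more application of Necessitation (Kf8) to the now-established theorem $\lambda$ yields $\Pr(\lambda,1)$, directly contradicting $\neg\Pr(\lambda,1)$. Hence $\mathrm{RKf} + \mathrm{Pr}4$ proves both $\Pr(\lambda,1)$ and its negation, so it is inconsistent. The only subtlety — and the step I would be most careful about — is the "distribution of $\Pr$ over the conditional", i.e. making sure the earlier Proposition (or a routine strengthening of it using Kf2 and the arithmetic of $Q$) genuinely licenses passing from $\Pr(\phi\rightarrow\psi,1)$ to $\Pr(\phi,1)\rightarrow\Pr(\psi,1)$; everything else is bookkeeping with Kf7, Kf8, and the diagonal lemma.
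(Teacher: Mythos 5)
Your proof is correct and follows essentially the same route as the paper's: diagonalize to get the probabilistic liar, necessitate the provable implication, distribute $\Pr$ over the conditional via the earlier Proposition, chain with the $\mathrm{Pr}4$ instance to refute $\Pr(\lambda,1)$, and then close the trap with the right-to-left direction of the diagonal equivalence plus one more application of Kf8. The only cosmetic point is that to instantiate the distribution step with $\phi:=\Pr(\lambda,1)$ and $\psi:=\neg\lambda$ you should necessitate the contrapositive $\Pr(\lambda,1)\rightarrow\neg\lambda$ (equally provable in $\mathrm{RKf}$) rather than $\lambda\rightarrow\neg\Pr(\lambda,1)$ itself, which is exactly what the paper does.
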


\noindent Note that proposition \ref{4inconsis} does not entail that there can be no models of RKf that make $ \Pr (\phi, 1) \rightarrow \Pr (\Pr (\phi, 1),1)   $ true for all $\varphi$ (or/and its converse). In our proof, we have applied rule Kf8 to a sentence obtained from $\mathrm{Pr}4$. This is only permitted if  $\mathrm{Pr}4$ is taken as an extra \textit{axiom}.

In a similar way, it can be shown that a form of \textit{negative introspection}, and also its converse, cannot consistently be added to $\mathrm{RKf}$:

\begin{proposition}\label{negintro}

\hspace{0cm}

\begin{enumerate}
 \item The principle $\Pr (x) < 1 \rightarrow \Pr (\Pr (x) < 1) = 1$ cannot consistently be added to $\mathrm{RKf}$;
\item The principle $ \Pr (\Pr (x) < 1) = 1  \rightarrow     \Pr (x) < 1 $ cannot consistently be added to $\mathrm{RKf}$.
\end{enumerate}

\begin{proof}
The simple proofs of 1. and 2. are exactly like the proofs of Theorem 3e and Theorem 3g, respectively, in \cite{SchusterHorsten}.
\end{proof}

\end{proposition}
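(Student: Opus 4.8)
The plan is to mimic the diagonal arguments used in Proposition \ref{4inconsis} and in Proposition \ref{refl}, exploiting the fact that $\mathrm{RKf}$ proves the normality of ``$\Pr(\cdot)=1$'' together with the availability of probabilistic liar-type fixed points. For part 1, I would first use the diagonal lemma to obtain a sentence $\lambda$ with $\mathrm{RKf}\vdash \lambda \leftrightarrow \Pr(\lambda)<1$; that is, $\lambda$ says of itself that it is assigned probability strictly below $1$. Working in the extension $\mathrm{RKf}+${}(negative introspection), I would apply the introspection principle to $\lambda$: from $\Pr(\lambda)<1$ we get $\Pr(\Pr(\lambda)<1)=1$, hence by the fixed-point equivalence (Necessitated and distributed over the biconditional, using Kf4-style reasoning and the normality established in the Proposition before Question~1) $\Pr(\lambda)=1$, contradicting $\Pr(\lambda)<1$. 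This shows $\mathrm{RKf}+${}(neg.\ introspection)$\vdash \neg(\Pr(\lambda)<1)$, i.e.\ $\vdash \Pr(\lambda)=1$; but then via the fixed point $\vdash \lambda$ is equivalent to $\vdash \Pr(\lambda)<1$, and Necessitation (Kf8) applied to the proved sentence $\neg(\Pr(\lambda)<1)$ closes the loop into outright inconsistency, exactly as in the liar argument of Proposition~\ref{4inconsis}.

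For part 2, the converse principle $\Pr(\Pr(x)<1)=1 \rightarrow \Pr(x)<1$, I would again take a suitable self-referential sentence — this time one asserting, roughly, $\Pr(\mu)=1$, i.e.\ $\mathrm{RKf}\vdash \mu \leftrightarrow \neg(\Pr(\mu)<1)$ — and run the dual computation: Necessitating one direction of the fixed point and distributing $\Pr$ over the conditional yields a probability-$1$ claim about ``$\Pr(\mu)<1$'', which the converse-introspection axiom then collapses back to $\Pr(\mu)<1$, again contradicting the fixed point after one more application of Kf8. The bookkeeping of which direction of the biconditional to Necessitate, and the precise role of Kf7$^-$ (to pass between $\Pr(x)=0$, $\Pr(x)<1$, and $\Pr(\neg x)=1$) and of the normality lemma, is the only delicate part; structurally these are the arguments labelled Theorem 3e and Theorem 3g in \cite{SchusterHorsten} for the analogous justified-belief operator, so I would simply transcribe those, replacing the belief predicate by ``$\Pr(\cdot)=1$'' and citing that the needed modal-logical facts (normality, necessitation) hold in $\mathrm{RKf}$ by the earlier results.

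The main obstacle I anticipate is not conceptual but a matter of care with coding and with the interplay between the \emph{schema} Kf4 (which only licenses the $\Pr(\varphi(t))=1 \equiv \varphi(t^\circ)$ equivalence for $\varphi$ in the \emph{$\Pr$-free} sublanguage $\mathcal{L}_{Q^-}$) and the fact that our fixed-point sentences $\lambda,\mu$ essentially contain $\Pr$. One must therefore route the distribution of $\Pr$ over conditionals/biconditionals through Kf8 and the normality proposition rather than through Kf4 directly — precisely the move already made legitimately in the proof of Proposition~\ref{4inconsis} — and be explicit that the introspection principle is being added as an \emph{axiom} (so that it, and its Necessitations, are available inside the scope of Kf8). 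Once that is respected, both halves are short diagonal computations with no further subtlety.
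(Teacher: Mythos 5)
Your part 1 is essentially correct and is the intended argument: taking the probabilistic liar $\lambda$ with $\mathrm{RKf}\vdash \lambda \leftrightarrow \Pr(\lambda)<1$, Kf8 applied to both directions of the biconditional together with the normality proposition yields $\Pr(\lambda)=\Pr(\Pr(\lambda)<1)$ provably, so the negative-introspection instance at $\lambda$ collapses to $\Pr(\lambda)<1 \rightarrow \Pr(\lambda)=1$; hence $\vdash\neg(\Pr(\lambda)<1)$, hence $\vdash\neg\lambda$, hence $\vdash\Pr(\neg\lambda)=1$ by Kf8, hence $\vdash\Pr(\lambda)=0<1$ by Kf7$^-$, hence $\vdash\lambda$ --- contradiction. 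Your point that one must route the distribution of $\Pr$ through Kf8 and the normality proposition rather than through the schema Kf4 (which is restricted to $\Pr$-free $\varphi$) is exactly the right caution.

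Part 2 as you sketch it has a genuine gap: the fixed point you choose does not produce a contradiction. Your $\mu$ with $\mu\leftrightarrow\neg(\Pr(\mu)<1)$ is a probabilistic \emph{truth-teller}, $\mu\leftrightarrow\Pr(\mu)=1$. Necessitation plus normality then give $\Pr(\Pr(\mu)<1)=\Pr(\neg\mu)=1-\Pr(\mu)$, so the instance of the converse principle at $\mu$ reads $1-\Pr(\mu)=1\rightarrow\Pr(\mu)<1$, i.e.\ $\Pr(\mu)=0\rightarrow\Pr(\mu)<1$, which is a triviality: nothing ever triggers the antecedent $\Pr(\Pr(\mu)<1)=1$, and the ``probability-$1$ claim about $\Pr(\mu)<1$'' you hope to extract from Necessitating a direction of the fixed point is simply not derivable (it would require $\Pr(\mu)=0$). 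Truth-tellers are, as usual, harmless. The repair is to reuse the \emph{same} liar $\lambda\leftrightarrow\Pr(\lambda)<1$ from part 1: since $\Pr(\Pr(\lambda)<1)=\Pr(\lambda)$, the converse-introspection instance at $\lambda$ becomes $\Pr(\lambda)=1\rightarrow\Pr(\lambda)<1$, whence $\Pr(\lambda)<1$, whence $\vdash\lambda$, whence $\vdash\Pr(\lambda)=1$ by Kf8 --- contradiction. (The paper itself gives no details, deferring to Theorems 3e and 3g of \cite{SchusterHorsten}, but that is the argument those labels stand for.)
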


From the point of view of typefree truth theory, the iteration principles  that are the subject of propositions \ref{4inconsis} and \ref{negintro} are typical analogues of principles that belongs to the $\mathrm{KF}$-family and are incompatible with  $\mathrm{FS}$. It is, as far as we can tell, an open and interesting question what reasonable analogues of $\mathrm{KF}$ for typefree subjective probability would look like.

Propositions \ref{refl}, \ref{4inconsis}, and  \ref{negintro} show that despite the fact that $\mathrm{RKf} $ is a \textit{basic} system of typefree subjective probability, it is nonetheless fairly restrictive. In particular, it is not very tolerant of introspection principles. In this sense, our findings so far are in harmony with the \textit{anti-luminosity} position that Williamson argues for on other grounds \cite[chapter 4]{Williamson}.

It is, however, consistent to add  to $\mathrm{RKf} $ the converse of Pr4 , which we call CPr4, and which is called \textit{Transparency} by Weisberg \cite[p.~190]{Weisberg}:\footnote{Weisberg observes that Luminosity implies Transparency \cite[p.~196, footnote 6]{Weisberg}.}

\begin{proposition}\label{cpr4}

$\mathrm{RKf + CPr4} $ is consistent.
\end{proposition}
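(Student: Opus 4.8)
The plan is to prove the claim by exhibiting a model $\mathcal{M}$ of $\mathrm{RKf}+\mathrm{CPr4}$. The guiding observation is that $\mathcal{M}$ cannot interpret $\mathrm{Pr}(x,y)$ two-valuedly. A two-valued interpretation over a classical structure is either a single Friedman--Sheard revision stage $(\mathbb{M},S_{n})$ --- which validates Kf8 by the standard bookkeeping on applications of Kf8 in a derivation, but refutes CPr4, because in $(\mathbb{M},S_{n+1})$ the principle CPr4 reduces to the inclusion $S_{n}\subseteq S_{n+1}$, and this fails for oscillating sentences such as the probabilistic liar --- or it is a genuine fixed point of the classical revision, which does not exist (the liar again). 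So $\mathcal{M}$ must assign genuinely fractional probabilities, and the natural route is a fixed-point construction over a space of probability assignments, adapting the probabilistic semantics for self-referential probability of \cite{Leitgeb2} and \cite{CampbellMoore} to the present axioms.

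Concretely: fix $\mathbb{M}\models Q$ and let $W$ be the (compact, and --- by a routine compactness argument --- non-empty) set of finitely additive probability assignments $r$ on the $\mathcal{L}_{Q}$-sentences that are correct on the $\mathrm{Pr}$-free fragment and satisfy Kf5--Kf7 together with the strengthened transparency condition $r\big(\mathrm{Pr}(\phi)=1\ \wedge\ \neg\phi\big)=0$ for all $\phi$; this condition entails $r(\mathrm{Pr}(\phi)=1)\le r(\phi)$, hence CPr4. Consider the ``re-evaluation'' operator $\Phi$ on the compact convex set of probability measures on $W$, sending $\nu$ to the assignment $r_{\nu}(\phi):=\nu\{\,r\in W:(\mathbb{M},r)\models\phi\,\}$. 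Here $r_{\nu}$ is automatically finitely additive and, by stage-independence of $\mathrm{Pr}$-free truth, correct on $\mathcal{L}_{Q^{-}}$; after a suitable restriction of $W$ ensuring $r_{\nu}$ again lands in $W$, one shows $\Phi$ is continuous --- this uniform continuity in the sentence being evaluated is the technical core --- and invokes the Kakutani/Schauder fixed-point theorem to get $\nu^{*}$ with $r^{*}:=r_{\nu^{*}}$ a fixed point. Then $\mathcal{M}:=(\mathbb{M},r^{*})$ satisfies Kf1--Kf4 by inspection and Kf5--Kf7 and CPr4 because $r^{*}\in W$; and Kf8 is obtained by arranging the construction so that $\nu^{*}$ concentrates on structures already validating the axioms, so that the Necessitation bookkeeping goes through even for derivations invoking CPr4.

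The step I expect to be the main obstacle is precisely this reconciliation of Kf8 with CPr4. Because CPr4-instances are axioms, Kf8 forces them to have probability $1$, so the construction must validate CPr4 \emph{internally}, not merely as a true schema: the cheap candidates --- a single $\{0,1\}$-valued revision stage, or a Ces\`aro average of such stages --- validate the CPr4 schema yet assign oscillating instances probability $\tfrac12$, and so violate Kf8 on those very instances. The one-directional form of CPr4 is essential and is what leaves room at all: by proposition~\ref{4inconsis} the converse Pr4 is inconsistent, so no construction can validate both directions internally. Accordingly I would expect the genuine work to be (a) choosing $W$ and $\Phi$ so that $\Phi$ preserves both finite additivity and the transparency condition while remaining a self-map of a compact convex set, (b) the continuity lemma underlying the fixed-point step, and (c) verifying the Necessitation clause Kf8 at the fixed point.
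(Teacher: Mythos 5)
Your strategy is genuinely different from the paper's, but as it stands it has a gap at exactly the point you yourself flag: the verification of Kf8 at the fixed point, and this is not mere bookkeeping. For Kf8 you need $r^{*}$ to assign probability $1$ to \emph{every} theorem of $\mathrm{RKf}+\mathrm{CPr4}$, including theorems obtained by iterated applications of Kf8 itself, e.g.\ $\mathrm{Pr}(\mathrm{Pr}(0=0,1),1)=1$ or the necessitation of a CPr4 instance. Under your definition $r^{*}(\psi)=\nu^{*}\{r\in W:(\mathbb{M},r)\models\psi\}$, this requires $\nu^{*}$-almost every $r$ in the support to itself satisfy the lower-level necessitated statements; but those $r$ are arbitrary members of $W$, which is cut out only by Kf5--Kf7, base-language correctness, and your transparency condition, none of which forces, say, $r(\mathrm{Pr}(0=0)=1)=1$. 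Your proposed remedy --- ``arranging the construction so that $\nu^{*}$ concentrates on structures already validating the axioms'' --- is circular: structures validating all the axioms including Kf8 are precisely what the construction is supposed to produce. The standard escape is an iterated shrinking $W_{0}\supseteq W_{1}\supseteq\cdots$ with a compactness argument at the limit (as in \cite{Christiano} and \cite{CampbellMoore1}), but then one must show that the transparency condition, and hence CPr4, survives each stage and that the intersection is non-empty; none of that is in your sketch. There is also a continuity/measurability problem you gloss over: the sets $\{r:(\mathbb{M},r)\models\psi\}$ for $\psi$ containing exact-value statements $\mathrm{Pr}(x)=y$ are closed but not open, and the Kakutani/Schauder argument is known to deliver only the open-interval form of reflection for precisely this reason, whereas Kf4, Kf6 and Kf7 are exact-value axioms.

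For comparison, the paper's proof is far more economical and bypasses all of this: it interprets $\mathrm{RKf}+\mathrm{CPr4}$ together with the extra axiom ``every probability is $0$, $1/2$ or $1$'' into the theory of a model $(Q,B_{\omega})$ of Cie\'sli\'nski's believability theory, translating $\mathrm{Pr}(x)=1$ as $B(x)$, $\mathrm{Pr}(x)=0$ as $B(\neg x)$, and $\mathrm{Pr}(x)=1/2$ as ``neither''; CPr4 then becomes the reflection axiom (A3) for $B_{\omega}$, and Necessitation is inherited from the closure properties of $B_{\omega}$ established in \cite{Cieslinski}. This also shows that your opening dichotomy (``two-valued, or genuinely fractional via a fixed point'') misses the option actually used: a single intermediate value $1/2$ for the undecided sentences suffices, with the hard consistency work delegated to an already-established result about believability predicates rather than to a new analytic fixed-point construction.
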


\begin{proof}
This follows from the proof of Theorem 5 in \cite{Cieslinski}, see also \cite{Cieslinski1}. The point is that the theory $\mathrm{RKf+CPr4}$ + ``for every $x$, $\Pr(x)$ is either 1 or 0 or 1/2'' is interpretable in the theory of the model $(Q, B_{\omega})$, with $B_{\omega}$ characterized as in Definition 6 of \cite{Cieslinski} (cf. also Definition 13.4.5 of \cite{Cieslinski1}). The interpretation is obtained by translating ``$\Pr(x) = y$'' as ``$\big(y = 1 \wedge B(x) \big) \vee \big(y = 0 \wedge B(\neg x) \big) \vee \big(y = 1/2 \wedge \neg B(x) \wedge \neg B(\neg x) \big)$''. In particular, the truth of the interpretation of Cpr4 follows from the fact that the model $(Q, B_{\omega})$ makes true the reflection axiom (A3) from Definition 4 of \cite{Cieslinski}.
\end{proof}


\subsection{$\sigma$-additive typefree probability}

Basically, our theory of \textbf{$\sigma$-additive typefree probability}, which we will call $\mathrm{RK}\sigma$ (``Reflexive Kolmogorov Sigma'') is like $\mathrm{RKf}$, except that an axiom of $\sigma$-additivity is added. Its principles are:

\begin{enumerate}

\item[K$\sigma$1] $R$

\item[K$\sigma$2] $\mathrm{Pr}$ is a function

\item[K$\sigma$3]  $ \Pr(x,y) \rightarrow (x \in \mathcal{L}_{R}  \wedge 0 \leq x \leq 1) $

\item[K$\sigma$4]  $ \forall t \in \textnormal{Tm}^{\textnormal{c}} \big( \Pr(\varphi(t)) = 1 \equiv \varphi(t^{\circ}) \big)$ \hspace{0.5cm} for all $\varphi \in \mathcal{L}_{R^-}$

\item[K$\sigma$5] $\Pr(x), \Pr(y) \leq \Pr(x \vee y)$


\item[K$\sigma$6] $ \Pr (x \dot{\vee} y) = \Pr (x) + \Pr (y) - \Pr (x \dot{\wedge} y)    $
\item[K$\sigma$7] $$  \frac{\vdash \phi}{\vdash \Pr (\phi , 1)} , $$

\item[K$\sigma$8] $ \Pr(\dot{\exists} x \in N : y(\dot{x}))  = \displaystyle{\lim_{n \to \infty} } \Pr ( y(\underline{0}) \vee \ldots \vee y(\underline{n})   )  $

\end{enumerate}

\noindent In axiom K$\sigma$4, we use the fact that ``internally'' we have names for all real numbers. This will play a role in some of the theorems in the next section.

As before, K$\sigma$6 is a non-compositional axiom.  Axiom K$\sigma$5 is introduced to compensate for this deficiency.


\section{Connection with the Friedman-Sheard system for typefree truth}

We will relate $\mathrm{RKf}$ and $\mathrm{RK}\sigma$ to the Friedman-Sheard theory $\mathrm{FS}$ of typefree truth, which we assume the readers to be familiar with.\footnote{FS was first introduced in \cite{FriedmanSheard}. The locus classicus for the proof-theoretic investigation of $\mathrm{FS}$ is \cite{Halbach}.} But $\mathrm{FS}$ is formulated ``over'' $\mathbb{N}$, whilst $\mathrm{RKf}$ is formulated ``over'' $\mathbb{Q}$, and $\mathrm{RK}\sigma$ is formulated ``over'' $\mathbb{R}$. So when we speak about  $\mathrm{FS}$ from now on, we will assume it to be formulated ``over'' $\mathbb{Q}$ or ``over'' $\mathbb{R}$: the context will make clear which is meant. We will not go into the boring but routine details of how to formulate $\mathrm{FS}$ ``over'' $\mathbb{Q}$ or ``over'' $\mathbb{R}$.

\begin{theorem}\label{theorem2}
$\mathrm{RK}\sigma$ is consistent, $\omega$-inconsistent, but sound for its mathematical sub-language.

\begin{proof}
See \cite{Leitgeb3}.
\end{proof}
\end{theorem}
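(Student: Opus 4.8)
The plan is to establish the three claims about $\mathrm{RK}\sigma$ separately, exploiting the analogy with the Friedman--Sheard theory $\mathrm{FS}$ over $\mathbb{R}$, since the $\sigma$-additivity axiom K$\sigma$8 is what forces $\omega$-inconsistency, exactly as the corresponding limit/infinitary principle does in the truth-theoretic setting. For \textbf{consistency}, I would argue by a proof-theoretic reduction: show that any derivation in $\mathrm{RK}\sigma$ can be mirrored in (an $\mathbb{R}$-version of) $\mathrm{FS}$ together with the relevant $\omega$-rule or limit principle, translating ``$\Pr(x)=1$'' by the truth predicate $T(x)$, ``$\Pr(x)=0$'' by $T(\dot\neg x)$, and leaving intermediate probability values to be handled by a suitable valued extension; alternatively one builds a model directly. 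The cleanest route is to cite the Kripke-style fixed-point or revision construction used for $\mathrm{FS}$, adapted so that the probability function is $\{0,\tfrac12,1\}$-valued (as in the proof of Proposition~\ref{cpr4}), and check that all of K$\sigma$1--K$\sigma$8 come out true in the resulting structure --- in particular that K$\sigma$8 holds because the limit of a monotone sequence of $0/1$ values is again a $0/1$ value computed correctly at the fixed point.

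For \textbf{$\omega$-inconsistency}, I would run the standard argument: by the diagonal lemma pick, for each $n$, uniformity in a formula $y(v)$ so that $\dot\exists x\in N\, y(\dot x)$ asserts its own ``eventual'' probability-$1$ status in a way that is refuted at every finite stage. Concretely, imitate McGee's theorem for $\mathrm{FS}$: construct a formula $\theta(v)$ such that $\mathrm{RK}\sigma$ proves $\Pr(\theta(\underline n))=0$ for every numeral $n$ (by an induction using K$\sigma$4, K$\sigma$6, K$\sigma$7), while K$\sigma$8 forces $\Pr(\dot\exists x\in N\,\theta(\dot x))=\lim_n \Pr(\theta(\underline 0)\vee\dots\vee\theta(\underline n))$, and then show $\mathrm{RK}\sigma$ also proves $\Pr(\dot\exists x\in N\,\theta(\dot x))=1$. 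The conjunction $\forall n\, \Pr(\theta(\underline n))=0$ together with the existential having probability $1$ is then $\omega$-inconsistent in the presence of the Peano axioms for $N$ and K$\sigma$6 (finite additivity gives $\Pr(\theta(\underline 0)\vee\dots\vee\theta(\underline n))=0$ for each $n$, so the limit is $0$, contradicting $1$).

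For \textbf{soundness over the mathematical sub-language} $\mathcal{L}_{R^-}$, the point is that although $\mathrm{RK}\sigma$ has no $\omega$-model, it is still a conservative (indeed sound) extension of $R^-$ (equivalently $R$) for $\mathrm{Pr}$-free sentences: any $\mathcal{L}_{R^-}$-sentence provable in $\mathrm{RK}\sigma$ is true in the intended structure $\mathbb{R}$. This follows from the consistency construction if the model used has the standard reals as its mathematical part --- one checks that the fixed-point/revision model is built on top of $(\mathbb{R},\dots)$, so every $\mathrm{Pr}$-free theorem of $\mathrm{RK}\sigma$ is true there. Equivalently, one shows that over the mathematical fragment $\mathrm{RK}\sigma$ is interpretable in $\mathrm{FS}$ over $\mathbb{R}$, whose $\mathrm{Pr}$-free (here: $T$-free) theorems are just those of $R$.

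The main obstacle will be the consistency proof, i.e.\ producing a structure (or a syntactic reduction) that simultaneously validates the compositional-style axioms K$\sigma$5--K$\sigma$7, the schema K$\sigma$4 with its ``internal names for all reals'', and the limit axiom K$\sigma$8 --- the last of which is precisely the ingredient that destroys $\omega$-models, so the construction must be genuinely non-standard in its interpretation of $N$ while remaining standard on $\mathcal{L}_{R^-}$. Getting K$\sigma$8 to hold requires that the sequence $n\mapsto\Pr(\theta(\underline 0)\vee\dots\vee\theta(\underline n))$ actually have the right limit in the sense internal to the model, which is delicate when the model's $\omega$ is non-standard; this is exactly the subtlety handled in \cite{Leitgeb3}, and it is why the theorem is stated with a reference rather than a self-contained proof here.
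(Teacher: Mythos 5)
Your high-level plan coincides with what the paper intends (it only cites Leitgeb's slides, but states that the proofs are ``borrowed'' from the metamathematics of $\mathrm{FS}$: McGee's argument for $\omega$-inconsistency, revision semantics for consistency). Your $\omega$-inconsistency sketch is the right shape. The genuine gap is in your consistency and soundness arguments, which as written are internally inconsistent with the $\omega$-inconsistency you establish two paragraphs later. You propose to build a single structure, standard on $\mathcal{L}_{R^-}$, in which \emph{all} of K$\sigma$1--K$\sigma$8 ``come out true''. No such structure exists: a model whose mathematical part is the genuine reals interprets $N$ as the standard naturals (the Peano axioms restricted to $N$ force this inside an Archimedean field --- any $c \in N\setminus\mathbb{N}$ would generate an infinite descending chain $c, c-1, c-2,\ldots$ in $N\cap[0,\infty)$), so it would be an $\omega$-model, which $\omega$-inconsistency precludes. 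Your closing remark that the construction should be ``non-standard in its interpretation of $N$ while remaining standard on $\mathcal{L}_{R^-}$'' is therefore not a way out; the whole mathematical part must be non-standard, and such a model is obtained only indirectly.

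The missing idea is Halbach-style stratification over the \emph{rule} K$\sigma$7: define the revision sequence $\Pr_0, \Pr_1, \ldots$ over the standard reals ($\Pr_{k+1}(\varphi)=1$ iff $(\mathbb{R},\Pr_k)\models\varphi$, else $0$), and show that any proof using at most $k$ nested applications of Necessitation is sound with respect to a suitable stage. Each stage is induced by a complete consistent set of sentences, so the two-valued probability satisfies K$\sigma$2--K$\sigma$6 and also K$\sigma$8 (a $0/1$-valued monotone sequence over standard $\mathbb{N}$ reaches its limit at a finite stage). Consistency and soundness for $\mathrm{Pr}$-free theorems then follow \emph{per theorem} --- each theorem lives in some finite fragment that has a standard-math model --- with no single model of the whole theory ever produced. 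A secondary problem: your $\{0,\tfrac12,1\}$-valued variant (borrowed from the proof of Proposition \ref{cpr4}) does not obviously satisfy K$\sigma$6; if $\Pr(x)=\Pr(y)=\Pr(x\dot\wedge y)=\tfrac12$ while $x\dot\vee y$ is determined with value $1$, additivity fails. The two-valued interpretation coming from each (complete) revision stage is what you need here.
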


\noindent The proofs of these properties are more or less ``borrowed'' from the metamathematics of $\mathrm{FS}$. For instance, the proof of the $\omega$-inconsistency of $\mathrm{RK}\sigma$ is a straightforward adaptation of McGee's argument that shows that $\mathrm{FS}$ is $\omega$-inconsistent (see \cite{Halbach}). This should not surprise us. The system $\mathrm{FS}$ is known as ``the most compositional typefree theory of truth''. The systems $\mathrm{RKf}$ and $\mathrm{RK}\sigma$ are also to a high degree compositional, and include the Necessitation rule.\footnote{It is known that given the presence of Necessitation, the Co-Necessitation rule does not make a proof theoretic difference for $\mathrm{FS}$ \cite[p.~322]{Halbach}.}

Theorem \ref{theorem2} shows that $\mathrm{RK}\sigma$ cannot serve as an acceptable background framework for formally investigating debatable principles concerning typefree subjective probability. Despite its mathematical soundness, its $\omega$-inconsistency is, in our opinion, almost as bad as full inconsistency.

The theory $\mathrm{RKf}$ can be trusted: all its theorems can be interpreted as true under the standard interpretation (see Corollary \ref{omegarkf} below). Since, as we have seen in section \ref{finite}, no simple introspection principles (with the exception of CPr4) can be consistently added as axioms to $\mathrm{RKf}$, they do not form a part of the minimal theory of subjective probability. On the other hand,  the theory $\mathrm{RK}\sigma$ is not to be be trusted, as it cannot be interpreted as true under the standard interpretation. In this context, we remind the reader that there is a long history of scepticism towards $\sigma$-additivity as a principle governing subjective probability \cite{deFinetti}, \cite{Howson}.\footnote{In \cite{LeitgebSchurz} it is argued that even for frequency interpretations of probability, $\sigma$-additivity is suspect.}

The connection between $\mathrm{RK}\sigma$ and $\mathrm{FS}$ goes even further than what theorem \ref{theorem2} describes:

\begin{theorem}\label{theorem3}
$\mathrm{FS}$ is relatively interpretable in $\mathrm{RK}\sigma$.

\begin{proof} (Sketch.) 

Let an intermediary system $\mathrm{RKf}\sigma^+$ be defined as $\mathrm{RKf}\sigma$ + $$\forall x,y: \Pr(x,y) \rightarrow (y=0 \vee y=1).$$

Consider the translation $\mu$ which is the homophonic translation for atomic mathematical formulae, commutes with the logical operators but restricts the quantifiers to the natural numbers, and has the following recursive clause for the truth predicate $T$:
$$  \mu (T x) \equiv \Pr ( \mu (x), 1)   .$$
Then $\mu$ is an interpretation of $FS$ in $\mathrm{RKf}\sigma^+$.\footnote{$FS$ is standardly presented as containing not just necessitation but also the conecessitation rule CONEC (from $T(\varphi)$, infer $\varphi$). However, when discussing arithmetical strength, CONEC can be ignored, since it is known that every arithmetical sentence provable in $FS$ can be proved without CONEC. See \cite[section 5]{Halbach}.} In particular, for interpreting the right to left implication in the FS axiom `$\forall \varphi, \psi \big(T(\varphi \vee \psi) \equiv T(\varphi) \vee T(\psi) \big)$' axiom K$\sigma$5 is used.


But $\mathrm{RKf}\sigma^+$ can be interpreted in $\mathrm{RKf}\sigma$ as follows. Let $\theta$ be the translation which  is the homophonic translation for atomic mathematical formulae, commutes with the logical operators, and has the following recursive clause for $\Pr$:
$$  \theta (\Pr (x,y)) \equiv \Pr ( \theta (x) ,y) \wedge (y=0 \vee y=1).$$
Then $\theta$ interprets $\mathrm{RKf}\sigma^+$ into $\mathrm{RKf}\sigma$.

Stringing these two facts together gives us an interpretation of $FS$ in $\mathrm{RKf}\sigma$.
\end{proof}
\end{theorem}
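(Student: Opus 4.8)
The plan is to split the interpretation of $\mathrm{FS}$ into $\mathrm{RK}\sigma$ into two stages via an intermediary theory $\mathrm{RKf}\sigma^+$ that forces $\Pr$ to take only the values $0$ and $1$. Once probability is two-valued, the predicate $\Pr(\cdot,1)$ behaves like a classical, compositional truth predicate, so a homophonic translation (with quantifiers relativised to $N$, since $\mathrm{FS}$ is formulated over $\mathbb{N}$ whereas $\mathrm{RK}\sigma$ is over $\mathbb{R}$) should carry the $\mathrm{FS}$ axioms to theorems of $\mathrm{RKf}\sigma^+$. The second stage is to show $\mathrm{RKf}\sigma^+$ itself is relatively interpretable in $\mathrm{RK}\sigma$, by a translation $\theta$ that simply relativises the probability predicate to its $\{0,1\}$-valued part: $\theta(\Pr(x,y)) \equiv \Pr(\theta(x),y) \wedge (y=0 \vee y=1)$. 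Composing the two translations yields the desired interpretation.

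First I would define $\mu$: homophonic on atomic mathematical formulae, commuting with connectives, relativising quantifiers to $N$, and with the clause $\mu(Tx) \equiv \Pr(\mu(x),1)$. Then I would verify, axiom by axiom, that the $\mathrm{FS}$ axioms translate to $\mathrm{RKf}\sigma^+$-theorems. The arithmetical/syntactic base axioms go through because $\mu$ is homophonic there and $\mathrm{RK}\sigma$ contains $R$ with Peano axioms relativised to $N$. The Tarski-style compositional clauses of $\mathrm{FS}$ — commutation of $T$ with $\neg$, $\vee$, and the quantifiers — are matched using K$\sigma$7 (Necessitation), K$\sigma$6 together with the two-valuedness stipulation of $\mathrm{RKf}\sigma^+$ (which collapses the inclusion–exclusion formula to the classical disjunction clause), K$\sigma$3, and, crucially, K$\sigma$5 for the right-to-left direction of the disjunction axiom $\forall\varphi,\psi(T(\varphi\vee\psi)\equiv T(\varphi)\vee T(\psi))$ — K$\sigma$6 alone only gives the left-to-right inequality chain, and monotonicity K$\sigma$5 supplies the reverse. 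The Necessitation rule of $\mathrm{FS}$ translates directly to K$\sigma$7, and CONEC can be ignored for the purpose of the interpretation claim since, as is well known, it is conservative over $\mathrm{FS}$ for the relevant sub-language.

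Next I would check $\theta$: here there is essentially one thing to confirm, namely that each axiom Kf$\sigma$1–8 of $\mathrm{RKf}\sigma$ is provable in $\mathrm{RKf}\sigma$ under the $\theta$-translation once we add the conjunct $(y=0\vee y=1)$ to every occurrence of $\Pr$ — i.e., that restricting $\Pr$ to its $\{0,1\}$-valued part still satisfies the Kolmogorov-style axioms. The functionality, boundedness, and Necessitation axioms are unproblematic; the finite-additivity axiom K$\sigma$6 and the $\sigma$-additivity axiom K$\sigma$8 need a short check that they remain valid when both sides are read through $\theta$, using that in $\mathrm{RKf}\sigma$ (before the restriction) $\Pr$ is already $\sigma$-additive, so its $\{0,1\}$-part inherits the needed closure. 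Then $\mathrm{RKf}\sigma^+$-theorems translate to $\mathrm{RKf}\sigma$-theorems, and $\mathrm{RKf}\sigma$ is just $\mathrm{RK}\sigma$ under our naming, so stringing $\mu$ and $\theta$ together gives an interpretation of $\mathrm{FS}$ in $\mathrm{RK}\sigma$.

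The main obstacle I expect is the disjunction clause in stage one: verifying $\mu(T(\varphi\vee\psi))\equiv \mu(T\varphi)\vee\mu(T\psi)$, i.e. $\Pr(\mu(\varphi)\,\dot\vee\,\mu(\psi),1)\equiv \Pr(\mu(\varphi),1)\vee\Pr(\mu(\psi),1)$, in $\mathrm{RKf}\sigma^+$. The forward direction follows from K$\sigma$6 plus two-valuedness (if the disjunction has probability $1$ then, since $\Pr(\varphi\dot\wedge\psi)\le\min$, the inclusion–exclusion identity forces at least one disjunct to have probability $1$); the converse is exactly where K$\sigma$5 earns its keep, since from $\Pr(\mu(\varphi),1)$ we get $\Pr(\mu(\varphi)\dot\vee\mu(\psi))\ge\Pr(\mu(\varphi))=1$, hence $=1$. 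I would also be slightly careful about the quantifier clause for $T$ in $\mathrm{FS}$ and its interaction with the $N$-relativisation, making sure the coding of $\mathcal{L}_R^\infty$ and the constant terms $c_x$ do not interfere — but this is the kind of routine bookkeeping that the sketch explicitly sets aside.
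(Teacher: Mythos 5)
Your proposal follows essentially the same route as the paper's own sketch: the same intermediary system $\mathrm{RKf}\sigma^{+}$ with the two-valuedness axiom, the same translation $\mu$ with quantifiers relativised to $N$ and $\mu(Tx)\equiv\Pr(\mu(x),1)$, the same use of K$\sigma$5 for the right-to-left direction of the disjunction axiom and the same dismissal of CONEC, and the same second translation $\theta$ composed with $\mu$. Your added detail on the forward direction of the disjunction clause (via K$\sigma$6 plus two-valuedness and nonnegativity) is a correct filling-in of what the paper leaves implicit.
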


\begin{corollary}
$\mathrm{RK}\sigma$ is at least as strong as the first-order part of Ramified Analysis up to level $\omega$.

\begin{proof}
This follows directly from theorem \ref{theorem3} and the fact that the arithmetical strength of $FS$ is exactly the first-order fragment of Ramified Analysis up to level $\omega$ \cite[section 5]{Halbach}.
\end{proof}
\end{corollary}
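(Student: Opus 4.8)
The plan is to reduce this corollary to Theorem~\ref{theorem3} together with the known proof-theoretic analysis of $\mathrm{FS}$, which we are allowed to invoke. Recall that Halbach's analysis (\cite{Halbach}, section~5) establishes that the arithmetical consequences of $\mathrm{FS}$ coincide exactly with those of Ramified Analysis iterated up to (but not including) level $\omega$; equivalently, $\mathrm{FS}$ proves the same arithmetical sentences as $\bigcup_{n<\omega}\mathrm{RA}_n$. So it suffices to transfer arithmetical (equivalently, after the coding of rationals/reals in $R^-$, first-order number-theoretic) theorems of $\mathrm{FS}$ into $\mathrm{RK}\sigma$ along the interpretation $\mu\circ\theta$ furnished by Theorem~\ref{theorem3}.

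The key steps, in order. First, spell out that relative interpretability is transitive for arithmetical content: if $\mathrm{FS}$ is relatively interpretable in $\mathrm{RK}\sigma$ via a translation that acts homophonically (up to relativizing quantifiers to $N$) on the mathematical vocabulary, then every sentence $\sigma$ in the language of arithmetic with $\mathrm{FS}\vdash\sigma$ yields $\mathrm{RK}\sigma\vdash\sigma^{N}$, where $\sigma^{N}$ is the $N$-relativization of $\sigma$. Since $\mathrm{RK}\sigma$ extends $R$ and hence proves the basic closure facts about $N$ (the Peano axioms restricted to $N$ are available via $Q\subseteq R$), the relativized arithmetical theorems are genuine theorems about the natural-number part of the structure. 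Second, invoke Halbach's lower bound: every arithmetical theorem of $\bigcup_{n<\omega}\mathrm{RA}_n$ is a theorem of $\mathrm{FS}$; compose with step one to conclude that every such theorem is (after relativization) a theorem of $\mathrm{RK}\sigma$. Third, observe that ``at least as strong as'' in the statement is to be read exactly in this sense of arithmetical/first-order consequence, so nothing more is needed: we are not claiming $\mathrm{RK}\sigma$ interprets $\mathrm{RA}_{<\omega}$ wholesale, only that it proves all first-order arithmetical sentences that $\mathrm{RA}_{<\omega}$ does.

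The main obstacle is essentially bookkeeping rather than conceptual: one must check that the translation of Theorem~\ref{theorem3}, which was described as ``relativizing the quantifiers to the natural numbers'' for the $\mathrm{FS}$-over-$\mathbb{N}$ part, meshes correctly with the fact that $\mathrm{FS}$ here is being taken ``over $\mathbb{Q}$'' or ``over $\mathbb{R}$'' (as stipulated in Section~4), so that the arithmetical fragment we extract is the intended one and the coding apparatus for $\mathcal{L}_R^{\infty}$ does not interfere. One should also note the harmless mismatch that Theorem~\ref{theorem3} officially interprets $\mathrm{FS}$ minus CONEC, but, as the footnote there records, CONEC is arithmetically conservative over the rest of $\mathrm{FS}$, so the arithmetical lower bound is unaffected. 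Once these alignment points are granted, the corollary is immediate from Theorem~\ref{theorem3} and \cite[section~5]{Halbach}.
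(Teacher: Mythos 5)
Your proposal is correct and follows exactly the paper's route: the paper's own proof is the one-line observation that the corollary follows from Theorem~\ref{theorem3} together with Halbach's determination of the arithmetical strength of $\mathrm{FS}$, and your additional remarks merely spell out the (correct) bookkeeping about how the homophonic-on-arithmetic interpretation transfers arithmetical theorems. Nothing in your elaboration diverges from or adds a genuinely different idea to the paper's argument.
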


\noindent So if $\mathrm{RK}\sigma$ is to be believed---but it isn't!---then just like the notion of set, and the notion of truth, the notion of (typefree) subjective probability has (some) mathematical power.

On the other hand, the notion of probability captured by $\mathrm{RKf}$ does not have mathematical power.

\begin{definition}\label{fsmin}
Let $FS^-$ be like FS but without the quantifier commutation axiom $\forall y : \exists x T y (x) \leftrightarrow T \exists x :  y (x)$. Instead, $FS^-$ contains the axiom schema ``$ \forall t \in \textnormal{Tm}^{\textnormal{c}} \big( \Pr(\varphi(t)) = 1 \equiv \varphi(t^{\circ}) \big)$'' for all formulas $\varphi$ of the base language (without the probability predicate).
\end{definition} 

\noindent The thought is that by moving from FS to $\mathrm{FS}^-$, we remove the mathematical ``sting'' from it, and that moreover $\mathrm{RKf}$  can be interpreted in the conservative system $\mathrm{FS}^-$.

Conservativity of $\mathrm{FS}^-$ can be established by interpreting it the theory RT of iterated truth, which is conservative over its base theory containing PA. Let $L_0$ be the base language; let $L_{n+1}$ be $L_n$ enriched with the new truth predicate $T_n$. (In effect, $L_{n+1}$ contains the truth predicates $T_0, \ldots , T_n$.) A theory $RT_n$ in the language $L_n$ is defined in the following way.

\begin{definition} $RT_0$ is PA. Apart from the axioms of PA, $RT_{n+1}$ contains the following axioms, for every $i \leq n$:
\begin{itemize} 
\item $ \forall t \in \textnormal{Tm}^{\textnormal{c}} \big( T_i(\varphi(t)) \equiv \varphi(t^{\circ}) \big)$ for each $\varphi \in L_0$,
\item $\forall \varphi \in L_i \big(T_i(\neg \varphi) \equiv \neg T_i(\varphi) \big)$,
\item $\forall \varphi, \psi \in L_i \big(T_i(\varphi \vee \psi) \equiv T_i(\varphi) \vee T_i(\psi) \big)$.
\item full induction for formulas of $L_{n+1}$.
\end{itemize}
In addition, $RT_{n+1}$ has the following necessitation rules for every $i \leq n$ and for every $\varphi \in L_i$:

$$  \frac{\vdash \phi}{\vdash T_i (\phi)} $$

\end{definition} 

\begin{lemma}\label{conser} For every $n$, $RT_n$ is conservative over its background mathematical theory.
\end{lemma}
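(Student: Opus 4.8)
The plan is to prove that for every $n$, $RT_n$ is conservative over $RT_0 = \mathrm{PA}$ by a straightforward induction on $n$, reducing at each stage to the well-known conservativity of the disquotational (typed, uniform) truth theory $\mathrm{UTB}$ — or rather its compositional-over-$L_0$ variant with necessitation — over its base. First I would isolate the single-step claim: if $T$ is a fresh predicate, $S$ is any consistent r.e. theory in a language $L$ extending $\mathrm{PA}$ (with full induction in $L$), then the theory $S^{T}$ obtained by adding the disquotation schema $\forall t\in\mathrm{Tm^c}(T(\varphi(t))\equiv\varphi(t^{\circ}))$ for $\varphi\in L_0$, the compositional clauses for $\neg$ and $\vee$ restricted to $L$-sentences, full induction for the expanded language $L\cup\{T\}$, and the necessitation rule for $L$-theorems, is conservative over $S$. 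Granting this single-step lemma, the main lemma follows: $RT_{n+1}$ is exactly $(RT_n')^{T_n}$ where $RT_n'$ is $RT_n$ with induction already extended — actually one must be slightly careful, since $RT_{n+1}$ adds $T_i$-clauses for \emph{all} $i\le n$ at once, not just $T_n$; but since the $T_i$ for $i<n$ together with their axioms are already (equivalent to) $RT_n$, and induction is simply widened, the theory $RT_{n+1}$ is a definitional-plus-disquotational extension of $RT_n$ of exactly the single-step form, so conservativity composes down the chain to $\mathrm{PA}$.

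The heart of the matter — the single-step lemma — I would prove model-theoretically. Let $\mathcal{M}\models S$; I want to expand $\mathcal{M}$ to a model of $S^{T}$ interpreting $T$ by some subset $\mathcal{T}\subseteq M$ so that (i) the disquotation schema holds, (ii) the compositional $\neg$- and $\vee$-clauses hold for (codes of) $L$-sentences, (iii) induction holds in the expanded language, and (iv) necessitation is satisfied, i.e.\ $\mathcal{T}$ contains all (codes of) $L$-theorems of $S$. The clean way is to take $\mathcal{M}$ to be $\omega$-saturated (equivalently, pass to a suitable elementary extension — this preserves $S$ since $S$ is a first-order theory) and then build $\mathcal{T}$ by a Henkin-style / recursion-on-$L$-complexity construction: for standard $L_0$-sentences one is forced by disquotation; one then closes under the compositional clauses for the (internal) $L$-sentence-codes. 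The role of $\omega$-saturation is to handle nonstandard-length conjunctions/disjunctions and to make the recursion well-defined on the internal syntax; this is the standard trick by which $\mathrm{UTB}$ and $\mathrm{CT}^-$-type theories are shown conservative (cf.\ the arithmetized completeness / Enayat–Visser style arguments, or more elementarily the "leapfrog" construction of Halbach). Induction in the expanded language is then automatic because $\mathcal{M}$ was taken to be a model with full induction and we have only named a new predicate, so every formula of $L\cup\{T\}$ defines, over $\mathcal{M}$ with $\mathcal{T}$ named, a subset that is an $L$-definable-with-parameters set in the saturated model — one must check that the construction of $\mathcal{T}$ can be carried out so that induction survives; the cleanest route is to do the construction over a \emph{recursively saturated} model, where the resulting expansion again satisfies full induction by a classical theorem (Barwise–Schlipf / Kotlarski–Krajewski–Lachlan-type results), so $S^{T}$ is consistent with every $L$-sentence true in $\mathcal{M}$, giving conservativity.

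The main obstacle, and the step I would spend the most care on, is exactly this: making the construction of the truth set $\mathcal{T}$ compatible with \emph{full} induction in the combined language $L_{n+1}$, rather than merely with $L_0$-induction. Naive disquotational truth predicates are conservative even with full induction (that is the content of $\mathrm{UTB}$'s conservativity over $\mathrm{PA}$), but here we additionally have the positive compositional clauses for $\neg$ and $\vee$ over the \emph{whole} expanding language and the necessitation rule, and one must ensure these do not secretly encode enough reflection to break conservativity — they do not, precisely because there is \emph{no} quantifier-commutation clause (that is the whole point of Definition~\ref{fsmin} and the design of $FS^-$), so the recursion on sentence-complexity never needs to "look inside" quantifiers and hence never forces a $\Sigma_n$-reflection-like instance. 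I would flag that the necessitation rule contributes only the $L$-theorems of $S$, a $\Sigma_1$ set, to $\mathcal{T}$, which is harmless. So the proof reduces to: (a) state the single-step conservativity lemma precisely; (b) prove it via expansion of a recursively saturated (or $\omega$-saturated) model of $S$, using the complexity-recursion to define $\mathcal{T}$, invoking the classical preservation of induction under such expansions; (c) note $RT_{n+1}$ has the single-step shape over $RT_n$; (d) induct on $n$ down to $\mathrm{PA}$. Routine bookkeeping about coding the internal syntax of $L_{n+1}$ inside $\mathrm{PA}$ is suppressed, consistent with the paper's stated policy on sloppiness about coding.
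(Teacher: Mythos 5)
There is a genuine gap at exactly the step you flag as the heart of the matter: preservation of \emph{full} induction in the expanded language. The classical results you invoke do not deliver this. The Kotlarski--Krajewski--Lachlan-type constructions over recursively saturated models produce satisfaction classes that need \emph{not} be inductive --- indeed, compositional truth \emph{with} full induction is famously non-conservative over $\mathrm{PA}$, so no general ``expansion of a saturated model preserves induction'' principle can exist. Your later remark that every $L\cup\{T\}$-formula defines an $L$-definable-with-parameters set is true only if $\mathcal{T}$ itself is definable in $\mathcal{M}$, which your Henkin-style recursion on (internal, possibly nonstandard) complexity does not provide. The paper's proof is built precisely to avoid this: it fixes a bound $k$ on the G\"odel numbers of proofs, interprets the new predicate $T_n$ as a maximal consistent extension of the \emph{definable} set $S_n=\{\psi\in L_n:\psi<k\wedge M\models\psi\}$ (definable via a partial truth predicate, with consistency of $S_n$ provable in $RT_n$), and then notes that a definable expansion of a fully inductive model is automatically fully inductive. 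The Lindenbaum-style maximal consistent set also gives the $\neg$- and $\vee$-clauses for free, with no recursion on complexity needed.

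A second, subtler problem is the necessitation rule. You say it ``contributes only the $L$-theorems of $S$, a $\Sigma_1$ set,'' but the rule in $RT_{n+1}$ licenses $T_i(\varphi)$ for $L_i$-theorems of $RT_{n+1}$ itself, so verifying it in your expansion either presupposes the conservativity you are proving or requires some device to break the circularity. The paper's restriction to proofs of G\"odel number $<k$ is exactly that device: any sentence necessitated inside such a proof has itself a proof of G\"odel number $<k$, is therefore (by the induction on the proof) true in $(M,S)$, hence true in $M$, hence in $S_n\subseteq S$. Your global observation that the absence of quantifier commutation is what makes conservativity possible is correct and matches the paper, and your overall architecture (induct on $n$, expand models) is in the right family; but as written the two load-bearing steps --- induction in the expanded language and the treatment of necessitation --- are not established.
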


\begin{proof} Let $RT_n \upharpoonright k$ denote the set of theorems of $RT_n$ having proofs with g\"{o}del numbers smaller than $k$. We demonstrate that:

\begin{itemize}
\item[(*)] $\forall k, n \forall M \big( M \models RT_n \rightarrow \exists S (M, S) \models RT_{n+1} \upharpoonright k \big)$.
\end{itemize}

In words: every model of $RT_n$ can be expanded to a model satisfying all the theorems of $RT_{n+1}$ which have proofs with g\"{o}del numbers smaller than $k$. 

For the proof of (*), fix $k$, $n$ and $M \models RT_n$. Define the set $S_n$ as $\{\psi \in L_n: \psi < k \wedge M \models \psi \}$ (so, $S_n$ contains only sentences with g\"{o}del numbers smaller than $k$). Since $S_n$ is the set of true sentences of restricted syntactic complexity, $RT_n$ proves that $S_n$ is consistent. Define $S$ (the intended interpretation of the predicate `$T_n$' of $RT_{n+1}$) as a maximal consistent extension of $S_n$. Note that $S$ is definable in $M$, hence it is fully inductive. 

It is easy to verify that every proof in $RT_{n+1}$ with g\"{o}del number smaller than $k$ contains only sentences true in $(M, S)$. This finishes the proof of (*).

From (*) it follows that each $RT_n$ is conservative over its background mathematical theory.\end{proof} \\

\noindent
\textit{Remark:} The proof of Lemma \ref{conser} employed the idea of expandability of models of $RT_n$ to models of certain fragments of $RT_{n+1}$. It should be emphasized that this does not generalize to full expandability, i.e., it is not true that every model of $RT_n$ is expandable to a model of full $RT_{n+1}$. In fact, the general expandability theorem fails already for $RT_0$ and $RT_{1}$.\footnote{For the proof, see \cite{Cieslinski1}, Theorem 6.0.13, p. 96.}

\begin{lemma}\label{cezary}
$FS^-$ is proof-theoretically conservative over its background mathematical theory $Q$.
\end{lemma}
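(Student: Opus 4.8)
The plan is to reduce conservativity of $FS^-$ over $Q$ to the conservativity of the iterated-truth theories $RT_n$ established in Lemma \ref{conser}, by giving, for each finite fragment of an $FS^-$-proof, a suitable interpretation into some $RT_n$. The key observation is that, unlike full $FS$, the system $FS^-$ lacks the quantifier-commutation axiom, and this is precisely the axiom whose presence forces the McGee-style $\omega$-inconsistency and the jump in arithmetical strength. Without it, each application of the truth predicate and each use of Necessitation effectively ``consumes one level'' of truth, so that any given proof in $FS^-$ only ever appeals to boundedly many nested occurrences of $T$.

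First I would show that $FS^-$ is \emph{locally interpretable} in the family $\{RT_n\}_{n\in\omega}$: given any finite set $\Gamma$ of $FS^-$-axioms (and finitely many instances of the Necessitation rule, organized as a proof tree of some height $h$), there is an $n$ and a translation $\tau_n$ taking $T$ to $T_0$ on the innermost level, and more generally assigning to each occurrence of $T$ a level in $\{0,\dots,n-1\}$ according to its nesting depth in the proof, such that $RT_n$ proves the $\tau_n$-translations of all members of $\Gamma$. Concretely, $\tau_n(Tx)$ should be defined by recursion on the syntactic complexity of $x$: atomic or $T$-free subformulas get $T_0$, and a formula with maximal $T$-nesting depth $k$ gets $T_k$. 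The compositional $FS^-$-axioms for $T$ (the negation and disjunction clauses, and the schema $\Pr(\varphi(t))=1 \equiv \varphi(t^{\circ})$ for base-language $\varphi$) translate, level by level, into the corresponding compositional axioms of $RT_n$; and each application of Necessitation at nesting depth $k$ in the $FS^-$-proof is mirrored by an application of the Necessitation rule for $T_{k}$ in $RT_{n}$, which is available since $RT_{n+1}$ (hence any sufficiently large $RT_m$) contains exactly those rules. The base theory $Q$ here plays the role that $PA$ plays in $RT_0$; the ``boring but routine'' reformulation of $FS$ over $\mathbb{Q}$ rather than $\mathbb{N}$ means we take $RT_0$ to be $Q$ (with induction restricted to $N$) and build the $RT_n$ hierarchy on top of it exactly as in the definition.

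Granting local interpretability, conservativity follows in the standard way: suppose $FS^- \vdash \sigma$ for some sentence $\sigma$ in the base language $\mathcal{L}_{Q^-}$. Fix a particular $FS^-$-proof of $\sigma$; it uses only finitely many axioms and finitely many Necessitation steps, so by the previous paragraph there is an $n$ with $RT_n \vdash \tau_n(\sigma)$. Since $\sigma$ is $\Pr$-free, $\tau_n(\sigma)$ is just $\sigma$ itself (the translation is homophonic on the base language), so $RT_n \vdash \sigma$. By Lemma \ref{conser}, $RT_n$ is conservative over its background mathematical theory $Q$, hence $Q \vdash \sigma$. This is exactly the claim.

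The main obstacle I anticipate is bookkeeping the level assignment so that it is genuinely consistent across an entire proof tree, not merely within a single formula: a subformula $Tx$ may occur at different nesting depths in different lines of the proof, and Necessitation raises the ambient level of everything in the necessitated subproof by one. The clean way to handle this is to first \emph{normalize} the $FS^-$-proof so that all Necessitation applications are pushed toward the leaves (this is where the footnote's remark that CONEC is dispensable, and the analogous fact that Necessitation can be controlled, is doing work), bound the resulting maximal ``Necessitation depth plus $T$-nesting depth'' by a single number $n$ read off from the proof, and then let $\tau_n$ assign to each occurrence of $T$ that fixed global level. One must check that with this uniform choice the compositional axioms still translate correctly --- the negation and disjunction clauses are unproblematic since they relate a formula to its immediate subformulas at the same level, and the base-language schema lands entirely at level $0$ --- and that no instance of the quantifier-commutation axiom is ever needed, which is guaranteed precisely because $FS^-$ omits it. A secondary, purely technical, point is verifying that $Q$ proves enough elementary syntax theory (the coding, $\textnormal{Tm}^{\textnormal{c}}$, $t^{\circ}$) for the $RT_n$-definitions to go through over $Q$ rather than $PA$; since $Q$ was assumed to contain the Peano axioms restricted to $N$, this is routine.
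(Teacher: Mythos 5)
Your proposal is correct and follows essentially the same route as the paper: the paper's proof simply invokes Halbach's argument (Theorem 14.31), using the level-assigning translations $g_n$ to interpret fragments of $FS^-$ with a bounded number of Necessitation applications into the theories $RT_n$, and then appeals to the conservativity of the $RT_n$'s from Lemma \ref{conser}. Your $\tau_n$ and the accompanying bookkeeping of nesting/Necessitation depth is precisely the role played by the $g_n$'s in that argument.
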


\noindent This is not surprising, since it is known that the `formalised $\omega$-rule' $$\forall y:\forall x T (y(x)) \rightarrow T( \forall x:(y(x))$$ is the main factor in the mathematical strength of $FS$.\footnote{See \cite{Sheard}.} \\

\begin{proof}
The proof is basically a repetition of the argument for Theorem 14.31 in \cite[p.~181--185]{Halbach2} with only minor changes: we use the functions $g_n$ defined on p.~181 in \cite{Halbach2} to provide an interpretation of fragments of FS without the quantifier axioms (fragments with restricted number of application of Necessitation) in the $RT_n$'s.
\end{proof}

\begin{theorem}\label{theorem 3}
$\mathrm{RKf}$ is proof-theoretically conservative (for the mathematical base language) over its background mathematical theory $Q$.
\end{theorem}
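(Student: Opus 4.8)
The plan is to reduce the conservativity of $\mathrm{RKf}$ to the conservativity of $\mathrm{FS}^-$, which was already established in Lemma \ref{cezary}. Concretely, I would exhibit a relative interpretation of $\mathrm{RKf}$ into $\mathrm{FS}^-$ that is the identity on the mathematical base language $\mathcal{L}_{Q^-}$; then any purely mathematical sentence provable in $\mathrm{RKf}$ would be provable in $\mathrm{FS}^-$, hence (by Lemma \ref{cezary}) in $Q$.

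The key step is to define the translation $\pi$ that sends the probability predicate to a formula of the language of $\mathrm{FS}^-$. The natural choice mirrors the translation $\theta$ from the proof of Theorem \ref{theorem3}, restricted to the two-valued case: set $\pi$ to be homophonic on atomic mathematical formulae, to commute with the connectives and quantifiers, and to put
$$ \pi(\Pr(x,y)) \;\equiv\; \bigl( y = 1 \wedge T(\pi(x)) \bigr) \vee \bigl( y = 0 \wedge T(\dot\neg\,\pi(x)) \bigr) \vee \bigl( y = 1/2 \wedge \neg T(\pi(x)) \wedge \neg T(\dot\neg\,\pi(x)) \bigr), $$
so that $\Pr$ always takes one of the three values $0,\tfrac12,1$. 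One then checks that each axiom of $\mathrm{RKf}$ is provable in $\mathrm{FS}^-$ under $\pi$: Kf1 and the extended schemes of $Q$ go through because $\mathrm{FS}^-$ contains $Q$ and the schemes lift; Kf2 and Kf3 are immediate from the shape of the translation; Kf4$^-$ is interpreted using the $\mathrm{FS}^-$ truth-biconditional schema for base-language $\varphi$ (this is exactly why $\mathrm{FS}^-$ was defined with that schema rather than with the quantifier-commutation axiom); Kf5$^-$ follows from the $\mathrm{FS}$-style disjunction axiom for $T$; Kf6$^-$ and Kf7$^-$ follow from compositionality of $T$ over $\vee$ and $\neg$ together with a small case analysis on the three possible values; and the Necessitation rule Kf8 is interpreted by the Necessitation rule of $\mathrm{FS}^-$, since $\pi(\Pr(\phi,1))$ reduces to $T(\pi(\phi))$ and from $\vdash\phi$ we get $\vdash\pi(\phi)$ and hence $\vdash T(\pi(\phi))$.

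The main obstacle I anticipate is verifying the compositional axioms Kf6$^-$ and Kf7$^-$ under the three-valued translation: one must confirm that the arithmetic identities $\Pr(x\dot\vee y)=\Pr(x)+\Pr(y)-\Pr(x\dot\wedge y)$ and $\Pr(\dot\neg x)=1-\Pr(x)$ come out provable once $\Pr$-values are forced into $\{0,\tfrac12,1\}$ and $T$ is only known to commute with $\neg$ and $\vee$. Since $\mathrm{FS}$ proves $T(\varphi\dot\wedge\psi)\equiv T(\varphi)\wedge T(\psi)$ and $T(\dot\neg\varphi)\equiv\neg T(\varphi)$, the relevant Boolean combinations of $T(\pi(x)), T(\pi(y))$ are fully determined, and a finite case check over the (at most) nine combinations of truth values yields the desired rational identities; the value $\tfrac12$ is consistent here precisely because $\mathrm{FS}$ permits a sentence and its negation to both fail to be true. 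A secondary point to be careful about is that the schema instances of $Q$ (and of Kf4$^-$) in $\mathcal{L}_Q$ that mention $\Pr$ must be checked to translate to genuine instances available in $\mathrm{FS}^-$; this is routine because $\pi(\Pr(x,y))$ is a bona fide formula of the language of $\mathrm{FS}^-$ and $\mathrm{FS}^-$ has the corresponding schemes for all formulas of its language. Stringing the interpretation together with Lemma \ref{cezary} then gives the theorem.
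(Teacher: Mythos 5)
Your proposal is correct and is essentially the paper's own argument: the paper likewise reduces the theorem to Lemma \ref{cezary} via an interpretation of $\mathrm{RKf}$ in $\mathrm{FS}^-$ that is homophonic on the mathematical base language, using the two-valued clause $\tau(\Pr(x,y)) \equiv (\neg T(\tau(x)) \wedge y=0) \vee (T(\tau(x)) \wedge y=1)$ (routed through the auxiliary theory $\mathrm{RKf}^+ = \mathrm{RKf} + \forall x,y\, (\Pr(x,y) \rightarrow y=0 \vee y=1)$). One caveat: your third disjunct is vacuous and your justification for it is mistaken --- $\mathrm{FS}$, and hence $\mathrm{FS}^-$, retains the axiom $T(\dot{\neg}\varphi) \equiv \neg T(\varphi)$, so it does \emph{not} permit a sentence and its negation to both fail to be true (that is a feature of $\mathrm{KF}$-style theories); consequently $\neg T(\pi(x)) \wedge \neg T(\dot{\neg}\pi(x))$ is refutable, the value $\tfrac{1}{2}$ never occurs, and your $\pi$ collapses to the paper's $\tau$ --- which does no harm to the argument, since the surviving two-valued case analysis still verifies Kf2--Kf7 and Necessitation exactly as you describe.
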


\begin{proof} (Sketch.) 

\noindent Let again $\mathrm{RKf}^+$ be defined as $\mathrm{RKf} + \forall x,y: \Pr(x,y) \rightarrow (y=0 \vee y=1)$, and let $FS^-$ (over $Q$) be just like in definition \ref{fsmin}.  Now consider the translation $\tau$ which is the homophonic translation for atomic mathematical formulae, commutes with the logical operators, and has the following recursive clause for $\Pr$:
$$  \tau (\Pr (x,y)) \equiv (\neg T(\tau (x) ) \wedge y=0) \vee (T(\tau (x) ) \wedge y=1)  .$$ Then $\tau$ is an interpretation of $\mathrm{RKf}^+$ in $FS^-$.  By lemma \ref{cezary}, $FS^-$ is conservative over $Q$.  Stringing these facts together gives us the conservativity result for $\mathrm{RKf}$. 
\end{proof}

\begin{lemma}\label{omegafs} $FS^-$ can be interpreted in the standard model of arithmetic, hence it is $\omega$-consistent.
\end{lemma}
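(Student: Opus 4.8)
The plan is to build, by hand, an $\omega$-model of $FS^-$ inside the standard model of arithmetic $\mathbb{N}$, by interpreting the truth predicate $T$ as a suitable set of (codes of) sentences of the language $\mathcal{L}_T$ over $\mathbb{N}$. The key observation is that $FS^-$, unlike full $FS$, lacks the quantifier-commutation axiom which (by Sheard's result, cited in the excerpt) is the source of the mathematical strength; hence there is no obstruction of the McGee type, and a genuine standard-model interpretation should exist. First I would fix the language $\mathcal{L}_T$ of $FS^-$ over $\mathbb{N}$ and recall that the axioms of $FS^-$ are: all of $Q$ (here arithmetic), the disquotation schema $\forall t\in\textnormal{Tm}^{\textnormal{c}}\big(T(\varphi(t))\equiv\varphi(t^{\circ})\big)$ for $\varphi$ in the base language, the compositional clauses for $\neg$ and $\vee$ (and hence the propositional connectives), and the Necessitation rule. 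The target is a set $S\subseteq\mathbb{N}$ such that $(\mathbb{N},S)\models FS^-$, treating the rule of Necessitation as the requirement that $S$ be closed under ``if $\varphi$ is a theorem then $\ulcorner\varphi\urcorner\in S$'' — but since we are building a model, it suffices that $S$ contain every sentence that is true in $(\mathbb{N},S)$, which handles Necessitation automatically because provability-in-$FS^-$ is sound for $(\mathbb{N},S)$ once the axioms hold there.

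The construction I would carry out is a Kripke-style fixed-point / revision-theoretic one, adapted to the compositional-but-not-quantificational axioms of $FS^-$. Concretely, define an operator $\Gamma$ on sets of (codes of) $\mathcal{L}_T$-sentences: $\varphi\in\Gamma(X)$ iff $\varphi$ is an atomic arithmetical sentence true in $\mathbb{N}$, or $\varphi$ is a true disquotational instance relative to the base language, or $\varphi=\neg\psi$ with $\psi\notin X$ (or, to be careful, with the negation-closure of $X$ witnessing falsity), or $\varphi=\psi\vee\chi$ with $\psi\in X$ or $\chi\in X$, etc. Crucially, for quantified sentences $\forall x\,\psi(x)$ I do \emph{not} impose the $\omega$-rule; instead I treat $T(\forall x\,\psi)$ as governed only by disquotation in the base fragment, so quantified sentences of $\mathcal{L}_T$ properly containing $T$ are simply never placed in $X$ (assigned ``undefined'', then arbitrarily closed off classically at the end). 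Because we need classical logic in $(\mathbb{N},S)$ and $T$ total, I would then take a fixed point of an associated monotone operator on \emph{consistent pairs} (the Kripke construction over the strong Kleene or van Fraassen supervaluation scheme) and finally extend the resulting partial $S$ to a complete, consistent set $S^\ast$ — this last classical completion is legitimate precisely because without the $\omega$-rule the partial fixed point is consistent and finitely satisfiable, so a classical maximal consistent extension definable over $\mathbb{N}$ exists. One then checks clause by clause that $(\mathbb{N},S^\ast)$ satisfies Kf-style composition for $\neg$ and $\vee$, the base-language disquotation schema, and is closed under Necessitation (by soundness).

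Alternatively — and this may in fact be the cleanest route — I would deduce the lemma directly from Lemma~\ref{cezary} together with Lemma~\ref{conser}: Lemma~\ref{cezary} gives a proof-theoretic conservativity of $FS^-$ over $Q$ via interpretation of its finite-Necessitation fragments into the $RT_n$'s, and Lemma~\ref{conser} shows each $RT_n$ has a standard (indeed definable) model over $\mathbb{N}$. Stringing these, every sentence of $\mathcal{L}_T$ provable in $FS^-$ using at most $n$ nested applications of Necessitation is, under $g_n$, provable in $RT_n$, which has an $\omega$-model; a compactness / union argument over the $g_n$-images then yields a single set $S$ with $(\mathbb{N},S)\models FS^-$, whence $FS^-$ is $\omega$-consistent. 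I expect the main obstacle to be the bookkeeping in stitching the partial truth-sets from the successive $RT_n$-models (or the successive Necessitation-layers) into one coherent total interpretation of the single predicate $T$ over $\mathbb{N}$: one must ensure the interpretations agree on overlaps and that the limit is still classically complete and closed under Necessitation. Once one observes that the $g_n$ translations are designed to be compatible in exactly this telescoping way — which is the whole point of the construction on p.~181 of \cite{Halbach2} reused in Lemma~\ref{cezary} — the union goes through, and $\omega$-consistency is immediate since every axiom and rule of $FS^-$ is verified in the standard-number interpretation.
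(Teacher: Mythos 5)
Your general strategy---build an expansion $(\mathbb{N},S)$ of the standard model by a revision-style construction and then close off classically---is the right one, and it is close to what the paper actually does: it iterates the Tarskian jump from $T_0=\emptyset$ via $T_{k+1}=\{\psi : (\mathbb{N},T_k)\models\psi\}$, takes the set $T_\omega$ of \emph{stable} sentences, and lets $T$ be a maximal consistent extension of $T_\omega$. But your proposal has a genuine gap exactly where the real work lies, namely at the Necessitation rule. You claim that ``it suffices that $S$ contain every sentence that is true in $(\mathbb{N},S)$,'' so that closure under Necessitation follows from soundness. That condition is unsatisfiable: since $S$ is consistent and $\{\psi : (\mathbb{N},S)\models\psi\}$ is maximal consistent, $S$ would have to equal it, i.e.\ $(\mathbb{N},S)$ would satisfy all Tarski biconditionals $T(\varphi)\equiv\varphi$, which the liar sentence rules out. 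What one actually needs is the weaker closure ``$FS^-\vdash\varphi$ implies $\varphi\in S$,'' and this does not follow merely from the axioms holding in $(\mathbb{N},S)$, because truth in $(\mathbb{N},S)$ and membership in $S$ come apart on ungrounded sentences. The paper handles this by an induction on proofs carrying a \emph{double} hypothesis---every line $\varphi_i$ of a proof is both a member of $T_\omega$ and true in $(\mathbb{N},T)$---where stability of $T_\omega$ under the jump is precisely what makes the Necessitation step go through. Nothing in your fixed-point-plus-completion construction secures this: an arbitrary maximal consistent completion of a Kripke fixed point need not contain, for instance, the iterated necessitations $T(T(\cdots T(A)\cdots))$ of the quantified $FS^-$ axioms $A$, which are ungrounded and hence absent from the fixed point itself.

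Your alternative route via Lemmas \ref{conser} and \ref{cezary} also does not close. A ``compactness / union argument'' is the wrong tool: compactness can return only consistency, never a model with standard arithmetical part (this is exactly the moral of McGee's theorem, by which full $FS$ is consistent yet $\omega$-inconsistent). Moreover the truth sets supplied by the successive $RT_n$-models do not ``agree on overlaps'': under the $g_n$ translations the single predicate $T$ is rendered by the distinct levels $T_0,\dots,T_n$, which correspond to stages of the revision sequence and genuinely flip-flop on sentences like the liar, so their naive union is inconsistent and no limit exists. The only part of the sequence that converges is the stable part---which is precisely the $T_\omega$ of the paper's proof, and which you would have to reintroduce to make either of your routes work.
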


\begin{proof}
The interpretation of $FS^-$ in the standard model of arithmetic is obtained by revision semantics. Let $N$ be the standard model of arithmetic. Define $T_0$ as empty, $T_{k+1} = \{\psi: (N, T_k) \models \psi \}$. Let $T_{\omega}$ be the set of stable sentences, that is:
\begin{center}
$T_{\omega} = \{\psi: \exists m \forall k \geq m (N, T_k) \models \psi \}$.
\end{center}

Define $T$ as a maximal consistent extension of $T_{\omega}$. Then $(N, T) \models FS^-$. Namely, given a proof $(\varphi_0 \ldots \varphi_k)$ in $FS^-$, it can be demonstrated by induction that $\forall i \leq k (\varphi_i \in T_{\omega} \wedge (N, T) \models \varphi_i)$. In particular, in the step for the necessitation rule, we use the fact that $T_{\omega}$ is closed under necessitation.
\end{proof} \\

Since $\mathrm{RKf}$ is interpretable in $FS^-$, we obtain the following corollary.

\begin{corollary}\label{omegarkf} $\mathrm{RKf}$ can be interpreted in the standard model of arithmetic, hence it is $\omega$-consistent.
\end{corollary}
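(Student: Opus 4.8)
The plan is to chain together two facts already in hand. By Theorem~\ref{theorem 3} (more precisely, by the interpretation $\tau$ constructed in its proof), $\mathrm{RKf}$ — or at least the stronger system $\mathrm{RKf}^+$, into which $\mathrm{RKf}$ trivially embeds — is relatively interpretable in $FS^-$; and by Lemma~\ref{omegafs}, $FS^-$ is itself interpretable in the standard model $N$ of arithmetic via the revision-semantic construction (iterating the jump $T_{k+1}=\{\psi:(N,T_k)\models\psi\}$, taking the stable set $T_\omega$, and extending to a maximal consistent $T$). Composing the interpretation $\tau$ with this model, we get a structure on the standard natural numbers (suitably coordinatised to carry the rationals and the $\mathrm{Pr}$-predicate) that satisfies every axiom of $\mathrm{RKf}$. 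Since the domain is the genuine standard model for the arithmetical/rational part, no nonstandard witnesses can appear, and $\omega$-consistency follows immediately: if $\mathrm{RKf}\vdash\exists x\,\neg\theta(x)$ over the numerals while also $\mathrm{RKf}\vdash\theta(\underline{n})$ for every $n$, this would contradict satisfaction in the standard structure.

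Concretely, I would first note that $\mathrm{RKf}$ is a subtheory of $\mathrm{RKf}^+$, so it suffices to interpret $\mathrm{RKf}^+$ in $N$. Second, I would invoke the composition: $\tau$ maps $\mathrm{RKf}^+$ into $FS^-$ (this is the content of the displayed translation in the proof of Theorem~\ref{theorem 3}, where $\mathrm{Pr}(x)=1$ goes to $T(\tau(x))$ and $\mathrm{Pr}(x)=0$ goes to $\neg T(\tau(x))$), and the revision interpretation of Lemma~\ref{omegafs} maps $FS^-$ into $N$; the composite is again an interpretation, and interpretations preserve $\omega$-models because relativising the numeral-indexed quantifiers over the definable copy of $\mathbb{N}$ inside $N$ gives back exactly $\mathbb{N}$. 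Third, I would observe that $\omega$-consistency is a consequence of having an $\omega$-model (the statement ``$FS^-$ is $\omega$-consistent'' in Lemma~\ref{omegafs} is derived the same way), so the corollary drops out.

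The only point requiring a word of care — and the place I would expect a referee to want reassurance — is that relative interpretability transfers $\omega$-consistency, not merely consistency. This is fine here because the interpretations involved are all of the standard kind: the interpretation of the base mathematical theory is homophonic (or, in Lemma~\ref{omegafs}, the identity on $N$), and where quantifiers are relativised (the natural-number-bounded ones), they are relativised to a set that, in the standard model $N$, coincides with $\{\underline{0},\underline{1},\dots\}$. Hence a universally-true-on-numerals, existentially-refutable combination in $\mathrm{RKf}$ would pull back to the same pathology in $(N,T)\models FS^-$, which Lemma~\ref{omegafs} excludes. I do not anticipate any genuine obstacle; the proof is a two-line composition, which is presumably why the authors state it as a corollary rather than a theorem.

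\begin{proof}
By Theorem~\ref{theorem 3}, the translation $\tau$ interprets $\mathrm{RKf}^+$ in $FS^-$, and hence interprets $\mathrm{RKf}$ in $FS^-$ (since $\mathrm{RKf}\subseteq\mathrm{RKf}^+$). By Lemma~\ref{omegafs}, $FS^-$ is interpreted in the standard model $N$ of arithmetic via the revision construction there. Composing these two interpretations yields an interpretation of $\mathrm{RKf}$ in $N$: the arithmetical part is interpreted homophonically (so that the copy of $\mathbb{N}$ given by the predicate $N$ is, in the standard model, genuinely $\{\underline{0},\underline{1},\dots\}$), and $\mathrm{Pr}$ is interpreted by a formula built from the revision truth set $T$. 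Thus $\mathrm{RKf}$ has an $\omega$-model, and is therefore $\omega$-consistent: were $\mathrm{RKf}\vdash\exists x\,\neg\theta(x)$ while $\mathrm{RKf}\vdash\theta(\underline n)$ for all $n\in\mathbb{N}$, both facts would transfer under the composite interpretation to the standard structure, which is impossible.
\end{proof}
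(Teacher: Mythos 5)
Your proposal is correct and follows exactly the paper's route: the paper derives the corollary by composing the interpretation $\tau$ of $\mathrm{RKf}$ (via $\mathrm{RKf}^+$) in $FS^-$ from Theorem~\ref{theorem 3} with the revision-semantic interpretation of $FS^-$ in the standard model from Lemma~\ref{omegafs}. Your additional remarks on why interpretability transfers $\omega$-consistency here merely spell out what the paper leaves implicit.
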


Theorem \ref{theorem3} and Corollary \ref{omegarkf} provide support for the hypothesis that $\mathrm{RKf}$ is an acceptable background framework for formally investigating debatable principles concerning typefree subjective probability, whilst $\mathrm{RK}\sigma$ most definitely is not.  $\mathrm{RKf}$ is a \textit{minimal} system for reflexive subjective probability, whilst $\mathrm{RK}\sigma$  is a \textit{maximal} system for reflexive subjective probability. Both systems represent natural positions in the landscape of systems of reflexive subjective probability.


\section{Probabilistic reflection}

Since $\mathrm{RKf}$ is an acceptable basic theory of typefree subjective probability, it is a suitable formal background against which questions of formal epistemology might be investigated. Let us have look at one example of this.

In \cite{vanFraassen}, van Fraassen proposed and explored the following probabilistic reflection principle:
\begin{definition}[V, ``van Fraassen'']
\begin{equation*}
\begin{split}
[n>0 \wedge \mathrm{Pr}_t(\mathrm{Pr}_{t+n} (\varphi) = a ) \neq 0  ] \rightarrow \\
\mathrm{Pr}_t ( \varphi \mid \mathrm{Pr}_{t+n}(\varphi) = a) = a.
\end{split}
\end{equation*}
\end{definition}
\noindent Here the subscripts of $\Pr$ are real numbers, representing moments in time. Then V imposes a connection between future and current credences. The antecedent is of course needed to ensure that the conditional probability in the consequent is well-defined.

The principle V (and variations on it) has been much discussed in the literature, and enjoys considerable popularity.
Principle V has an air of ill-foundedness. If we think of later credences as determined, perhaps by conditionalisation, by earlier credences, in a way similar to the way in which higher level typed truth predicates are determined by lower level typed truth predicates, then V seems to break type restrictions.

The variant of van  Fraassen's V by setting $n=0$ in V, is truly type-free; and as a coordination principle for probability functions through time, it seems interesting \cite[p.~322]{Christensen}:
\begin{definition}[RV]
\begin{equation*}
\begin{split}
 \mathrm{Pr}(\mathrm{Pr}(\varphi) = a ) \neq 0   \rightarrow 
\mathrm{Pr} ( \varphi \mid \mathrm{Pr}(\varphi) = a) = a.
\end{split}
\end{equation*}
\end{definition}

Nonetheless, RV cannot consistently  be added as a new axiom to $\mathrm{RKf}$:
\begin{proposition}\label{inconsisRV}
$\mathrm{RKf}$ + RV is inconsistent.

\begin{proof}
We reason in $\mathrm{RKf}$ + RV. 

By the diagonal lemma, we may take a sentence $\lambda$ such that $\vdash \lambda \leftrightarrow \Pr (\lambda) <1, $ or, equivalently,  $\vdash \neg \lambda \leftrightarrow \Pr (\lambda) =1. $

Assume, for a reductio, that $\mathrm{Pr}(\mathrm{Pr}(\lambda) = 1 ) \neq  0$. Then, by RV for the case where $a=1$, $\mathrm{Pr} ( \lambda \mid \mathrm{Pr}(\lambda )= 1) = 1,$ which is equivalent to
$\mathrm{Pr} ( \Pr (\lambda) <1 \mid \mathrm{Pr}(\lambda) = 1 ) = 1,$ which is in turn equivalent to $$\frac{\Pr ( \Pr (\lambda) <1 \wedge \mathrm{Pr}(\lambda) = 1)}{\Pr ( {\mathrm{Pr}(\lambda) = 1} )}=1,$$ which yields a contradiction.

So we conclude  $ \vdash \mathrm{Pr}(\mathrm{Pr}(\lambda) = 1 ) =  0$. Then, by the diagonal property, $ \vdash \mathrm{Pr}( \neg \lambda ) =  0, $ which by a Kolmogorov axiom is equivalent to $ \vdash \mathrm{Pr}( \lambda ) =  1. $ By Necessitation, we then get $\vdash \Pr (\mathrm{Pr}( \lambda ) =  1) = 1$, which gives us a contradiction.
\end{proof}
\end{proposition}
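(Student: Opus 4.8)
The plan is to run a probabilistic-liar argument in the style of Kaplan--Montague, combining $\mathrm{RV}$ at parameter $a=1$ with the Necessitation rule Kf8 and the Kolmogorov negation axiom Kf7. Work throughout in $\mathrm{RKf}+\mathrm{RV}$. By the diagonal lemma, fix a sentence $\lambda$ with $\mathrm{RKf}\vdash \lambda\leftrightarrow\Pr(\lambda)<1$, equivalently $\mathrm{RKf}\vdash\neg\lambda\leftrightarrow\Pr(\lambda)=1$.

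First I would establish $\vdash\Pr(\Pr(\lambda)=1)=0$ by reductio. Suppose $\Pr(\Pr(\lambda)=1)\neq 0$. Then the antecedent of the instance of $\mathrm{RV}$ with $\varphi:=\lambda$ and $a:=1$ is satisfied, so $\Pr(\lambda\mid\Pr(\lambda)=1)=1$; unfolding the ratio formula, this says $\Pr(\lambda\wedge\Pr(\lambda)=1)=\Pr(\Pr(\lambda)=1)$. But by the diagonal equivalence $\lambda\leftrightarrow\Pr(\lambda)<1$, the sentence $\lambda\wedge\Pr(\lambda)=1$ is provably equivalent to $\Pr(\lambda)<1\wedge\Pr(\lambda)=1$, which is refutable in $Q$; hence by Kf8 its negation has probability $1$, so by Kf7 it has probability $0$, forcing $\Pr(\Pr(\lambda)=1)=0$ and contradicting our assumption. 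So $\vdash\Pr(\Pr(\lambda)=1)=0$.

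Next I would transfer the diagonal equivalence under $\Pr$. Since $\mathrm{RKf}\vdash\neg\lambda\leftrightarrow\Pr(\lambda)=1$, and since provably equivalent sentences receive equal probability in $\mathrm{RKf}$ (this follows from Kf8 together with the earlier monotonicity fact $\Pr(\phi\to\psi,1)\to\Pr(\psi)\ge\Pr(\phi)$, applied in both directions), we get $\vdash\Pr(\neg\lambda)=\Pr(\Pr(\lambda)=1)=0$. By Kf7, $\Pr(\neg\lambda)=1-\Pr(\lambda)$, so $\vdash\Pr(\lambda)=1$. This is now a theorem of $\mathrm{RKf}+\mathrm{RV}$, so Kf8 applies to it and yields $\vdash\Pr(\Pr(\lambda)=1)=1$. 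Together with $\vdash\Pr(\Pr(\lambda)=1)=0$ and $Q\vdash 0\neq 1$, this is a contradiction, so $\mathrm{RKf}+\mathrm{RV}$ is inconsistent.

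The main delicate points will be the bookkeeping around coding and around the principle that $\Pr$ respects provable equivalence: one must check that the numerator $\Pr(\lambda\wedge\Pr(\lambda)=1)$ genuinely collapses to $0$ (via the provable falsity of $\Pr(\lambda)<1\wedge\Pr(\lambda)=1$ and axioms Kf7--Kf8), and that the passage from $\mathrm{RKf}\vdash\neg\lambda\leftrightarrow\Pr(\lambda)=1$ to $\vdash\Pr(\neg\lambda)=\Pr(\Pr(\lambda)=1)$ is licensed. Once these are in place the argument is a short calculation; as the authors warn, the coding details are suppressed.
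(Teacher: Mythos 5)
Your proposal is correct and follows essentially the same route as the paper's own proof: the same probabilistic liar $\lambda$, the same reductio via RV at $a=1$ and the ratio formula to get $\Pr(\Pr(\lambda)=1)=0$, and the same final clash with Necessitation. You merely make explicit two steps the paper leaves implicit (that the numerator collapses to $0$ via Kf8 and Kf7, and that the passage from $\neg\lambda$ to $\Pr(\lambda)=1$ under $\Pr$ is licensed because provably equivalent sentences receive equal probability by Proposition 1 plus Kf8), which is a welcome filling-in rather than a deviation.
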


\noindent This again illustrates the restrictiveness of even the minimal calculus $\mathrm{RKf}$.

Other variants of van Fraassen's principle V have been considered in the literature. In the light of proposition \ref{inconsisRV}, they should be regarded with suspicion. Indeed, Campbell-Moore considers the following variant V$^*$: 
\begin{center}
$  \Pr_t (\varphi \mid \Pr_\textrm{t+n} (\varphi) \in [a,b]) \in [a,b]  \textrm{ for all } a,b \textrm{ with } a\leq b .$
\end{center}
She shows by a simple diagonal argument:
\begin{proposition}
$RKf + V^*$ is inconsistent.

\begin{proof}
Theorem 1.7.1 in \cite{CampbellMoore1}.
\end{proof}
\end{proposition}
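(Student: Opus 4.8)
The plan is to reduce the claim to Proposition~\ref{inconsisRV}. Read $V^*$ type-free, i.e. with a single probability predicate (the $n=0$ case, exactly as RV is the $n=0$ case of V): $\Pr(\varphi\mid\Pr(\varphi)\in[a,b])\in[a,b]$ for all $a\leq b$. Now observe that RV is immediate from the $a=b$ instances of $V^*$: since $z\in[a,a]$ just means $z=a$, the $V^*$-clause $\Pr(\varphi\mid\Pr(\varphi)\in[a,a])\in[a,a]$ is the RV-clause $\Pr(\varphi\mid\Pr(\varphi)=a)=a$ (if anything, the $V^*$-instance is stronger, since it does not even carry RV's well-definedness antecedent). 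Hence $\mathrm{RKf}+V^*$ proves every instance of RV, so it extends $\mathrm{RKf}+\textrm{RV}$, which is inconsistent by Proposition~\ref{inconsisRV}. I would present this one-line reduction as the main argument; there is no real obstacle.

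For completeness I would also record the self-contained diagonal argument, which is essentially Campbell-Moore's and is instructive. By the diagonal lemma fix $\lambda$ with $\mathrm{RKf}\vdash\lambda\leftrightarrow\Pr(\lambda)<1$, equivalently $\mathrm{RKf}\vdash\neg\lambda\leftrightarrow\Pr(\lambda)=1$, and note that $\Pr(\lambda)=1$ is the same condition as $\Pr(\lambda)\in[1,1]$. Reasoning in $\mathrm{RKf}+V^*$: applying Kf8 to the provable biconditional $\neg\lambda\leftrightarrow\Pr(\lambda)\in[1,1]$ together with the consequence of Proposition~1 that $\mathrm{RKf}$-provably equivalent sentences get equal probability, we obtain $\Pr(\Pr(\lambda)\in[1,1])=\Pr(\neg\lambda)$, which by Kf7 equals $1-\Pr(\lambda)$. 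Suppose $\Pr(\neg\lambda)\neq 0$; then $\Pr(\Pr(\lambda)\in[1,1])\neq 0$, so $V^*$ at $[a,b]=[1,1]$ and $\varphi:=\lambda$ gives $\Pr(\lambda\mid\Pr(\lambda)=1)=1$; but by the ratio formula, the provable equivalence of $\Pr(\lambda)=1$ with $\neg\lambda$, and the fact that $\Pr$ of a contradiction is $0$ (from Kf6, Kf7 and Kf8 applied to $\lambda\vee\neg\lambda$), the left-hand side equals $\Pr(\lambda\dot{\wedge}\neg\lambda)/\Pr(\neg\lambda)=0$ — a contradiction. Hence $\Pr(\neg\lambda)=0$, i.e. $\Pr(\lambda)=1$, so by the diagonal equivalence $\neg\lambda$; thus $\mathrm{RKf}+V^*\vdash\neg\lambda$, whence Kf8 yields $\Pr(\neg\lambda)=1$, i.e. $\Pr(\lambda)=0$, contradicting $\Pr(\lambda)=1$.

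The only step calling for any care — and thus the ``main obstacle'', such as it is — is the routine bookkeeping that $\Pr$ respects provable equivalence and assigns value $0$ to contradictions and $1$ to logical truths, uniformly enough to license the rewriting of the conditional probabilities above; this is the same compositional reasoning already used in the proofs of Propositions~\ref{4inconsis} and~\ref{inconsisRV}, resting on Kf6, Kf7, Kf8 and Proposition~1, and it is legitimate here precisely because $V^*$ is taken as an extra axiom, so Kf8 may be applied inside $\mathrm{RKf}+V^*$. In view of the reduction in the first paragraph none of this is strictly needed, which is why the bare citation of Campbell-Moore's Theorem~1.7.1 suffices.
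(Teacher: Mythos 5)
Your proposal is correct for the statement as it is literally formulated in the paper's own language: since $\mathrm{RKf}$ has a single predicate $\mathrm{Pr}$, the synchronic reading of $V^*$ is the natural one, its $[a,a]$ instances are exactly (indeed, for lacking the well-definedness guard, slightly more than) RV, so $\mathrm{RKf}+V^*$ extends $\mathrm{RKf}+\mathrm{RV}$ and Proposition~\ref{inconsisRV} applies; your back-up diagonal argument is essentially the paper's proof of Proposition~\ref{inconsisRV} rerun with interval notation and is fine at the paper's level of rigour. This is a genuinely different route from the paper, which offers no argument at all but simply cites Theorem~1.7.1 of \cite{CampbellMoore1}; your reduction is more elementary and makes the proposition self-contained, which is a gain. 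The one caveat is that it does not recover the content of the cited theorem. Campbell-Moore's $V^*$ is stated with $\mathrm{Pr}_t$ and $\mathrm{Pr}_{t+n}$, and her diagonal argument (in effect: a sentence $\lambda\leftrightarrow\mathrm{Pr}_{t+n}(\lambda)<1$ together with conditioning on suitable intervals of the \emph{later} credence) yields inconsistency of the \emph{diachronic} interval principle, without the identification $\mathrm{Pr}_t=\mathrm{Pr}_{t+n}$ that the paper had to invoke, in the paragraph just before this proposition, in order to make the synchronic result RV tell against V. Your $a=b$, $n=0$ reduction and your self-contained argument both collapse the two times, so they establish only the synchronic special case; if the proposition is read, as the displayed form of $V^*$ suggests and as the surrounding discussion intends, to cover $n>0$ as well, then the appeal to \cite{CampbellMoore1} (or an adaptation of her interval argument to the two-predicate setting) is still needed, and your reduction should be presented as proving the special case rather than the cited result.
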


Observe that proposition \ref{inconsisRV} also tells against van Fraassen's principle V. An agent may not update her probability function during some interval $[t, t+n]$, for some $n>0$, because no new evidence has come in to conditionalize on, and because she has in this interval no reasons for adopting a radically different probability function. Then $\Pr_t = \Pr_{t+n}$. But if $\Pr_t$ satisfies RKf, then the pair $\Pr_t, \Pr_{t+n}$ cannot satisfy $\mathrm{RKf}$ + V.

Van Fraassen's principle V has been criticised anyway. Some drug might make one confident that one can fly; if I think I'll take this drug tomorrow, my present conditional confidence that  I'll be able to fly tomorrow, given that tomorrow I'll be quite sure that I can fly, should not be very high \cite[p.~321]{Christensen}. But RV has been taken by many as a law of rational subjective probability. Van Fraassen, for instance, refers to RV as the ``synchronic---I should think, uncontroversial--- part of [V]'' \cite[p.~19]{vanFraassen2}. The point of proposition \ref{inconsisRV} is that the inconsistency of RV can be \textit{proved} from Kolmogorov principles for finitely additive probability in a typefree setting.



\end{document}